\definecolor{vdarkred}{rgb}{0.6,0,0.2}
\definecolor{vdarkblue}{rgb}{0,0.2,0.6}
\newcommand{\lam}{\lambda}
\newcommand{\al}{\alpha}
\newcommand{\bgt}{\begin{itemize}}
	\newcommand{\ent}{\end{itemize}}
\newcommand{\ds}{\displaystyle}
\newcommand{\brem}{\begin{rmk}}
	\newcommand{\erem}{\end{rmk}}
\newcommand{\blem}{\begin{lem}}
	\newcommand{\elem}{\end{lem}}
\newcommand{\bcor}{\begin{cor}}
	\newcommand{\ecor}{\end{cor}}
\newcommand{\bTh}{\begin{Th}}
	\newcommand{\eTh}{\end{Th}}
\newcommand{\bpropo}{\begin{propo}}
	\newcommand{\epropo}{\end{propo}}
\newcommand{\supp}{\operatorname{supp}}
\newcommand{\R}{\mathbb{R}}
\newcommand{\ff}{\frac{1}}
\newcommand{\bbm}{\begin{bmatrix}}
	\newcommand{\ebm}{\end{bmatrix}}
\newcommand{\bes}{\begin{equation*}}
\newcommand{\ees}{\end{equation*}}
\newcommand{\be}{\begin{equation}}
\newcommand{\ee}{\end{equation}}
\newcommand{\beqy}{\begin{eqnarray}}
\newcommand{\eeqy}{\end{eqnarray}}
\newcommand{\beq}{\begin{eqnarray*}}
	\newcommand{\eeq}{\end{eqnarray*}}
\newcommand{\bpm}{\begin{pmatrix}}
	\newcommand{\epm}{\end{pmatrix}}
\newtheorem{Th}{Theorem}[section]
\newtheorem{propo}[Th]{Proposition}
\newtheorem{lem}[Th]{Lemma}
\newtheorem{cor}[Th]{Corollary}
\theoremstyle{definition}
\newtheorem{rmk}[Th]{Remark}
\long\def\symbolfootnote[#1]#2{\begingroup
	\def\thefootnote{\fnsymbol{footnote}}\footnote[#1]{#2}\endgroup} 
\providecommand{\keywords}[1]
{
	\small	
	\textbf{\textit{\footnotesize Keywords ---}} #1
}
\def\@addpunct#1{%
	\relax\ifhmode
	\ifnum\spacefactor>\@m \else#1\fi
	\fi}
\newcommand{\keywordsname}{Key words}
\def\@setkeywords{%
	{\itshape \keywordsname.}\enspace \@keywords\@addpunct.}
\def\keywords#1{\def\@keywords{#1}}
\let\@keywords=\@empty
\g@addto@macro{\maketitle}{\begingroup%
	\let\@makefnmark\relax  \let\@thefnmark\relax%
	\ifx\@keywords\@mpty\else\@footnotetext{\@setkeywords}\fi%
	\endgroup}
\def\blfootnote{\gdef\@thefnmark{}\@footnotetext}
\date{}
\author{Cambyse Pakzad}
\title{Large deviations principle for the largest eigenvalue of the Gaussian $\beta$-ensemble at high temperature}
\newcommand{\Addresses}{{
		\bigskip
		\footnotesize
		\textsc{MAP 5, UMR CNRS 8145 - Universit\'e Paris Descartes, France}\par\nopagebreak
		\textit{E-mail address} : \texttt{cambyse.pakzad@gmail.com}
	}}
	\keywords{Large deviations principle, Random matrices, Gaussian $\beta$-ensembles, Extreme eigenvalue.}
\begin{document}
		\maketitle
		\blfootnote{\textup{2010} \textit{Mathematics Subject Classification}.
			60B20 - 60F10}
	\begin{abstract}We consider the Gaussian $\beta$-ensemble when $\beta$ scales with $n$ the number of particles such that $\ds{{n}^{-1}\ll \beta\ll 1}$. Under a certain regime for $\beta$, we show that the largest particle satisfies a large deviations principle in $\R$ with speed $n\beta$ and explicit rate function. As a consequence, the largest particle converges in probability to $2$, the rightmost point of the semicircle law. 
	\end{abstract}
	\section{Introduction and Main Result}
	The probability field naturally endeavors to reach an understanding of the concept of \textit{rare event}. The theory of large deviations principle merged in an attempt of investigating such idea. Its purpose is to measure the exponential decay rate of probabilities of atypical, extreme, tail events or large fluctuations according to the number of observations. Many stochastic processes can be studied this way and multiple applications have been found in both applied and fundamental mathematics, whose one of the deepest links is with statistical mechanics. On the other hand, large random matrices theory has become a wide area of interest with also many links with other fields, and a natural framework to consider strongly correlated system, namely the eigenvalues. Two statistics are commonly studied: the empirical spectral measure and the largest eigenvalue. The first one is \textit{global} as it involves the whole spectrum, while the second is \textit{local} and is the one we are interested in.

Despite the abundance of matrix models studied and their relative statistics, the collusion with large deviations theory is not well understood as only specific cases are treated. The seminal work on the subject is \cite{betaLDP} where the authors establish a large deviations principle at speed $n^2$ with explicit rate function for the empirical spectral measure of \textit{Gaussian $\beta$-ensemble}, which is a collection of $n$ particles with joint law proportional to: $$P(\mathrm{d}\lambda_1,\ldots ,\mathrm{d}\lambda_n) \propto \exp \left( -n
\sum_{i=1}^{n}V\left( \lambda_i\right) + \beta \sum_{i<j}^{n}\log \left| \lambda_j-\lambda_i\right|
\right) \prod_{i=1}^{n}\mathrm{d}\lambda_i ,$$ for $\beta>0$ and quadratic potential $\ds{V(x)=x^2}$. In a Gibbs interpretation of $P$, the Hamiltonian models a system of $n$ particles at inverse temperature $\beta$ undergoing a Coulomb repulsion force (\textit{log-gas} because the interaction is logarithmic) and an external potential. Thereafter in \cite{cupbook}, the same authors extend the result to a larger class of potential $V$ provided that the confining $V$ grows faster than the repulsion's scale which is logarithmic. By a compactification procedure, the author of \cite{hardy2012note} allows the repulsion and the potential to be of same order. Lifting such restriction, the limiting measure is deterministic and may not have compact support.

Regarding different matrix models, deformed matrices from the Gaussian ensemble have been first studied in \cite{maida2007large} where a large deviations principle at speed $n$ on the largest eigenvalue of the sum of a GOE or GUE drawn matrix with a rank one deterministic symmetric (or Hermitian) is shown. In particular, the question of how large should the perturbation be to untie the largest eigenvalue from the bulk. In a second time, the result is enlarged in \cite{benaych2011fluctuations} to the sum of a finite rank random matrix and the $\beta$-ensemble (its tridiagonal matrix representation is due to \cite{MatrixModelBetaEnsemble}) with potential $V$ satisfying the growth condition of \cite{cupbook}. In another direction, the case of extremes of Wigner matrices without Gaussian tails is conducted in \cite{augeri2016large}.

The large deviations principle for the empirical measure provides a keystone to tackle extreme statistics. Indeed, the largest particle depends directly on positions of the $n-1$ remaining ones. The same authors of \cite{betaLDP} carried on the work in \cite{aging} to a large deviations principle for the extreme eigenvalue at speed $n$ and explicit rate function. They also generalized this result in \cite{cupbook} to further more potentials $V$ assuming a condition on the partition function. In this vein, the large deviations principle proved in \cite{PoissonTempLow} for the empirical spectral measure of the Gaussian $\beta$-ensemble in a regime of high temperature $n\beta =2\gamma \geq 0$ is fundamental to our purpose.

The Gaussian $\beta$-ensemble at high temperature has already been studied in several articles. Regarding the global statistics, the limiting semicircle law and CLT theorem are recovered in \cite{trinh_global} by a martingale approach, see also \cite{trinh_jacobi}, in the regime $n\beta\gg 1$. For local statistics, in \cite{PoissonTempLow}, the bulk eigenvalues are shown to exhibit Poissonian statistics by inspecting the correlation functions under the assumption $n\beta =\gamma\geq 0$. The corresponding result for the edge in the regime $n\beta\ll 1$ is treated by same means in \cite{pakzad}. Later, the bulk result for $n\beta =\gamma\geq 0$ is retrieved by \textit{Minami's method} in \cite{trinh_bulk_poisson} and an extension of the CLT to this new regime is introduced. Besides, the idea of using stochastic operator approach (see \cite{Sutton1, RRV}) in the regime $n\beta\gg 1$ is mentioned in \cite{TWHighTemp}.

The aim of this article is to study by means of large deviations technique the largest eigenvalue of the Gaussian $\beta$-ensemble at high temperature, namely we permit the inverse temperature $\beta$ to depend on $n$ such that $\ds{\beta \ll 1}$ and keep the quadratic potential. Many regimes are available, but we restrict ourselves to $\ds{n\beta\gg 1}$. 

Our proof has the same structure as the one of Theorem 2.6.6 from \cite{cupbook}, the main differences laying in Section \ref{section} where we estimate the tail of the largest eigenvalue and analyze some ratio of partition functions involved in the algebraic computations.

Through this article, we adopt the notation: for $u=u_n$ and $v=v_n$ two sequences, $\ds{u\ll v\iff \frac{u}{v} \xrightarrow[n\infty]{}0}$. We state our main result:
\begin{Th} \label{main_result}
	Let $\beta =\beta_n >0$. Assume $\ds{\frac{\log(n)}{n}\ll \beta \ll \ff{\log(n)}}$. Let $(\lam_1,\ldots,\lam_n)$ be distributed according to $P_{n,\al,\beta}$ with $\ds{\al=\frac{n\beta}{2}\gg 1}$: $$P_{n,\alpha ,\beta }(\mathrm{d}\lambda_1,\ldots ,\mathrm{d}\lambda_n)=
\frac{1}{Z_{n,\alpha ,\beta }} \exp \left( {-\frac{\alpha }{2} \sum_{i=1}^{n}\lambda ^2_i}\right)
\prod_{i<j}^n \left| \lam_i-\lam_j \right| ^\beta \prod_{i=1}^{n}\mathrm{d}\lambda_i.$$ Let $\sigma$ be the semicircle law on $[-2,2]$. Then $\lam_{\max} := \max_{i\leq n}\lam_i$ satisfies a large deviations principle in $\R$ with speed $n\beta$ and rate function 
\[\begin{aligned} J(x)= \left\{ \begin{array}{lll} \displaystyle {- \int \log \left| x-y\right|
\mathrm{d}\sigma (y)+\frac{x^2}{4} -\frac{1}{2}} &{} &{}\quad {\mathrm{if} x\ge 2} \\ +\infty &{}
&{}\quad {\mathrm{if} x< 2.} \end{array} \right. \end{aligned}\]
\end{Th}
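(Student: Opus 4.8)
The plan is to follow the classical two-step strategy for large deviations of the extreme eigenvalue of a Coulomb gas, as in Theorem 2.6.6 of \cite{cupbook}, but tracking the high-temperature scaling $n\beta\gg1$ throughout. The starting observation is that $\lam_{\max}$ has an explicit density obtained by integrating out $\lam_2,\ldots,\lam_n$ in $P_{n,\al,\beta}$; equivalently, writing the event $\{\lam_{\max}\approx x\}$ as ``one particle sits at $x$ and the remaining $n-1$ form a Gaussian $\beta$-ensemble of size $n-1$ conditioned to stay in $(-\infty,x]$,'' one gets
\[
\p(\lam_{\max}\in\ud x)\;=\;\frac{n}{Z_{n,\al,\beta}}\,\me^{-\frac{\al}{2}x^2}\,
\PAR{\int_{(-\infty,x]^{n-1}}\prod_{i<j}\ABS{\lam_i-\lam_j}^\beta\prod_{i=2}^n\me^{-\frac{\al}{2}\lam_i^2}\,\ud\lam_i}\,\ud x\,\cdot\,\prod_{i=2}^n\ABS{x-\lam_i}^\beta,
\]
so that, taking $\log$, dividing by $n\beta$, the rate is governed by three pieces: the quadratic term $\frac{\al}{2}x^2/(n\beta)=x^2/4$, the ratio of partition functions $Z_{n-1,\al,\beta}^{(-\infty,x]}/Z_{n,\al,\beta}$, and the interaction term $\frac1{n\beta}\sum_{i\ge2}\beta\log\ABS{x-\lam_i}=\frac1n\sum_{i\ge2}\log\ABS{x-\lam_i}$, which under the empirical-measure LDP of \cite{PoissonTempLow} concentrates at $\int\log\ABS{x-y}\,\ud\sigma(y)$ since $n\beta\gg1$ forces the equilibrium measure to be the semicircle law.

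First I would establish the upper bound. For $x<2$ one must show $\p(\lam_{\max}\ge x-\eps)$ decays faster than any $\me^{-Cn\beta}$, i.e. $J\equiv+\infty$ there; this is immediate from the fact that $\lam_{\max}\to2$ in probability, which itself follows from the edge tail estimate (the key new input, developed in Section \ref{section}) and the convergence of the empirical measure to $\sigma$. For $x\ge2$ one covers a small interval $(x-\eps,x+\eps)$, bounds $\prod_{i\ge2}\ABS{x'-\lam_i}^\beta$ from above using continuity of $y\mapsto\log\ABS{x-y}$ near the support $[-2,2]$ (note $x\ge2$ keeps us away from the singularity), and controls the conditioned partition function ratio from above by the unconditioned one; the delicate point is to show $\frac1{n\beta}\log\PAR{Z_{n-1,\al,\beta}/Z_{n,\al,\beta}}\to\frac12$, which is exactly the ``ratio of partition functions'' analysis the author flags as one of the two main novelties. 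For the lower bound I would fix $x\ge2$, restrict to the event that $\lam_{\max}\in(x-\eps,x+\eps)$ \emph{and} the empirical measure of the other $n-1$ particles is close to $\sigma$ in a weak-topology neighbourhood (this has probability $\ge\me^{-o(n\beta)}$ by the empirical-measure LDP and the fact that $\sigma$ is the unique minimizer with no mass penalty at high temperature), on which the interaction term is $\ge(1-o(1))\int\log\ABS{x-y}\,\ud\sigma(y)$ by weak convergence plus a truncation of $\log\ABS{x-\cdot}$ (again harmless since $x\ge2$), and then combine with the same partition-function ratio and the quadratic term.

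The main obstacle I expect is the asymptotics of the partition-function ratio $Z_{n-1,\al,\beta}/Z_{n,\al,\beta}$ and, relatedly, of the constrained partition function $Z_{n-1,\al,\beta}^{(-\infty,x]}$, in the precise window $\frac{\log n}{n}\ll\beta\ll\frac1{\log n}$. Unlike the fixed-$\beta$ case, here $\beta\to0$ while $n\beta\to\infty$, so one cannot simply quote Selberg-type formulas or the $\beta$-fixed free-energy expansion; one needs sufficiently sharp control — presumably via the explicit semicircle equilibrium problem together with a careful second-order (entropy/fluctuation) correction of order $n\beta$ — to pin the constant $\frac12$ and to verify that conditioning the $(n-1)$-gas to live on $(-\infty,x]$ with $x\ge2$ costs only $o(n\beta)$, i.e. the effective potential $\frac{x^2}{4}-\int\log\ABS{x-y}\,\ud\sigma(y)-\frac12$ is nonnegative on $[2,\infty)$ and vanishes at $x=2$ (this is the standard effective-potential/variational characterization of the edge, and it is what makes $J$ continuous at $2$ with $J(2)=0$). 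Establishing these estimates uniformly in the stated $\beta$-regime, rather than the LDP scaffolding around them, is where essentially all the work lies; the rest is a transcription of the argument in \cite{cupbook}.
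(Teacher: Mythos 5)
Your overall scaffolding matches the paper's (factor out the top particle, appeal to the empirical-measure LDP of \cite{PoissonTempLow}, analyze a partition-function ratio, prove a tail estimate for exponential tightness), but two of your key technical expectations are off, and one step is genuinely incomplete.

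First, you assert that ``one cannot simply quote Selberg-type formulas'' in the window $\frac{\log n}{n}\ll\beta\ll\frac1{\log n}$, and that the constant $\tfrac12$ must be extracted via an equilibrium problem with a second-order entropy/fluctuation correction. This is not the case: the Selberg (Mehta) integral gives $Z_{n,\al,\beta}$ as an \emph{exact} product of Gamma functions for every $\al,\beta>0$, regardless of how $\beta$ scales with $n$. The paper's Lemmas~\ref{partition_limit1} and~\ref{partition_limit2} compute $Z_{n-1,\al,\beta}/Z_{n,\al,\beta}$ directly from this identity; the only analytic input is $\Gamma(\beta/2)\sim 2/\beta$ near zero together with Stirling's formula for $\Gamma(n\beta/2)$, plus the observation that $\beta\log(n\beta)\ll1$ in the stated regime. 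No equilibrium-measure expansion is needed. Relatedly, the paper never introduces a conditioned partition function $Z^{(-\infty,x]}_{n-1,\al,\beta}$; it keeps the unconstrained ratio and localizes the $(n-1)$-gas by intersecting with box events $\{\lam_j\in[-M,M]\}$ or $\{\lam_j\in(-M,r)\}$, showing the escape probabilities are negligible via the tail bound (Lemma~\ref{tailbound_largest}) and the empirical-measure LDP. Your conditioned-partition-function route is plausible but would require extra work to relate the constrained and unconstrained normalizations; the paper avoids this entirely.

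Second, your treatment of the degenerate case $x<2$ has a real gap. You claim the conclusion is ``immediate from the fact that $\lam_{\max}\to2$ in probability.'' But $J(x)=+\infty$ for $x<2$ requires $P(\lam_{\max}\le x)$ to decay faster than $\me^{-Cn\beta}$ for \emph{every} $C$, and convergence in probability only gives $P(\lam_{\max}\le x)\to0$ with no rate. The correct mechanism, used in the paper, is that $\{\lam_{\max}\le x\}\subset\{L_n\in F\}$ for a weakly closed set $F$ not containing $\sigma$, so by Theorem~\ref{ldp_esd} this probability is $\le\me^{-cn^2\beta}$; dividing by $n\beta$ then produces an extra factor of $n$ which sends the normalized log-probability to $-\infty$. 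The point is that the super-exponential decay comes from the mismatch between the empirical-measure speed $n^2\beta$ and the edge speed $n\beta$, not from mere consistency of $\lam_{\max}$.

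Finally, a small but worth noting point: your displayed ``density'' of $\lam_{\max}$ puts $\prod_{i\ge2}|x-\lam_i|^\beta$ outside the integral over $\lam_2,\dots,\lam_n$, which does not parse since those are integration variables; the correct form is the paper's \emph{key formula}, which keeps this product inside, writes it as $\exp\bigl((n-1)\beta\,\phi_n(\lam_n,L_{n-1})\bigr)$, and reweights the $(n-1)$-fold integral into an expectation under $P_{n-1,\al,\beta}$ with the prefactor $Z_{n-1,\al,\beta}/Z_{n,\al,\beta}$.
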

As a corollary, we have:
\begin{cor}
	Under the assumptions of Theorem \ref{main_result}, the largest particle \(\displaystyle {\lambda_{\max } :=
\max \nolimits_{i\le n}\lambda_i}\) converges in probability to $2$ as $n$ goes to infinity.
\end{cor}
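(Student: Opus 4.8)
The plan is to read the corollary directly off the large deviations principle of \theo{main_result}. I will use the elementary fact that if a sequence of real random variables satisfies an LDP at speed $n\beta$ with rate function $I$, and if $\{x:I(x)=0\}=\{x_0\}$ while $\inf\{I(x):|x-x_0|\ge\eps\}>0$ for every $\eps>0$, then the sequence tends to $x_0$ in probability: the large deviations upper bound applied to the closed set $\{|x-x_0|\ge\eps\}$ forces the normalised log-probability to have a strictly negative $\limsup$, so $\p(|\,\cdot\,-x_0|\ge\eps)\to0$. Hence everything reduces to checking that the rate function $J$ of \theo{main_result} vanishes only at $x=2$ and is bounded away from $0$ outside every neighbourhood of $2$.

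For this I would analyse $J$ on $[2,\infty)$ (it is $+\infty$ on $(-\infty,2)$ by definition, and finite and continuous on $[2,\infty)$ since the logarithmic singularity of $y\mapsto\log|x-y|$ is $\si$-integrable). The computation rests on two classical identities for the semicircle law: the equilibrium relation $\int\log|x-y|\,\ud\si(y)=\tfrac{x^2}{4}-\tfrac12$ valid for $|x|\le2$ gives $J(2)=-\tfrac12+1-\tfrac12=0$; and the value of the Stieltjes transform $\int(x-y)^{-1}\,\ud\si(y)=\tfrac12\big(x-\sqrt{x^2-4}\big)$ for $x>2$ gives $J'(x)=\tfrac{x}{2}-\tfrac12\big(x-\sqrt{x^2-4}\big)=\tfrac12\sqrt{x^2-4}>0$ for $x>2$. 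So $J$ is strictly increasing on $[2,\infty)$ with $J(2)=0$, whence $\{J=0\}=\{2\}$ and, for every $\eps>0$,
\[
\inf\{J(x):|x-2|\ge\eps\}=\min\Big(\inf_{x\le2-\eps}J(x),\ \inf_{x\ge2+\eps}J(x)\Big)=J(2+\eps)>0 .
\]

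Finally I would fix $\eps>0$, apply the large deviations upper bound of \theo{main_result} to the closed set $F_\eps:=\{x\in\R:|x-2|\ge\eps\}$, and obtain
\[
\limsup_{n\to\infty}\frac{1}{n\beta}\log\p\big(|\lam_{\max}-2|\ge\eps\big)\;\le\;-\inf_{F_\eps}J\;=\;-J(2+\eps)\;<\;0 ,
\]
so that $\p(|\lam_{\max}-2|\ge\eps)\to0$ (in fact exponentially fast at speed $n\beta$); since $\eps>0$ is arbitrary, $\lam_{\max}\to2$ in probability. There is no genuine obstacle here: beyond \theo{main_result} the only inputs are the two standard potential-theoretic formulas for $\si$ and the routine verification that $2$ is the unique minimiser of $J$.
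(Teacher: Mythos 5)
Your proposal is correct and is exactly the routine deduction the paper has in mind; the paper itself omits the proof of the corollary, relying on the reader to make this standard inference (it records, via \cite[Exercise 2.6.4]{cupbook}, that $J\ge0$, $J(2)=0$, and that $J$ is continuous and increasing to $+\infty$ on $[2,\infty)$, which is all one needs). The only genuine difference is cosmetic: where the paper cites those facts, you re-derive them directly, computing $J'(x)=\tfrac12\sqrt{x^2-4}$ from the Stieltjes transform of $\sigma$ and using the equilibrium identity $\int\log|x-y|\,\ud\sigma(y)=\tfrac{x^2}{4}-\tfrac12$ on $[-2,2]$ to get $J(2)=0$. Both computations are standard and correct, and your application of the LDP upper bound to the closed set $F_\eps=\{|x-2|\ge\eps\}$ (together with the observation that the speed $n\beta\to\infty$ under the standing hypothesis $\log(n)/n\ll\beta$) is exactly what is needed.
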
 Likewise, the authors of \cite{cupbook,aging,betaLDP} made use of the primal large deviations principle for the empirical spectral measure; we stand on the corresponding result for our setup proved in \cite{PoissonTempLow}:
 \begin{Th}\label{ldp_esd}
 	Let $\beta=\beta_n \ll 1$ such that $n\beta\gg 1$ and let $\ds{\al \sim \frac{n\beta}{2}}$. For $(\lam_1,\ldots,\lam_n)$ $P_{n,\al,\beta}$-distributed, the empirical measure $\ds{L_n=\ff{n}\sum_{i=1}^{n}\delta_{\lam_i} }$ satifies a large deviations principle in $M_1(\R)$ endowed with the weak topology, at speed $n^2\beta$ and with rate function $$I(\mu) = \int_{\R^2} \frac{x^2+y^2}{2}-\ff{2}\log \left|x-y \right|\mathrm{d}\mu^{\otimes 2}(x,y)-\frac{3}{8}.$$ 
 \end{Th}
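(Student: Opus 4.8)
Off the diagonal the density is a function of $L_n$ alone, so (using $\al\sim n\beta/2$)
\[
P_{n,\al,\beta}(\ud\lam_1,\ldots,\ud\lam_n)=\frac{1}{Z_{n,\al,\beta}}\exp\!\big(-n^2\beta\,\cE_{\neq}(L_n)\big)\prod_{i=1}^{n}\ud\lam_i,\qquad \cE_{\neq}(\mu):=\frac14\int x^2\,\ud\mu(x)-\frac12\!\int\!\!\int_{x\neq y}\!\log|x-y|\,\ud\mu(x)\,\ud\mu(y),
\]
and the statement is that $L_n$ obeys an LDP at speed $n^2\beta$ with rate $I=\cE-\inf\cE$, where $\cE$ is the weakly lower semicontinuous envelope of $\cE_{\neq}$. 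Its unique minimiser is the semicircle law $\si$ on $[-2,2]$ — the density solving $\tfrac x2=\mathrm{p.v.}\!\int(x-y)^{-1}\ud\si(y)$ on $[-2,2]$ — with $\cE(\si)=\tfrac14\int x^2\ud\si-\tfrac12\!\int\!\!\int\log|x-y|\,\ud\si^{\otimes2}=\tfrac14+\tfrac18=\tfrac38$, so $\inf I=0$ and $I$ takes the announced form; one also checks that $I$ is a good rate function, its sub-level sets being tight (quadratic confinement) and weakly closed (lower semicontinuity of the logarithmic energy). From here I would run the three-step scheme of \cite{betaLDP}: exponential tightness, a weak upper bound on closed sets, a matching lower bound on open sets.

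\emph{Exponential tightness and upper bound.} From $\log|x-y|\le|x|+|y|$ one gets $\cE_{\neq}(\mu)\ge\tfrac18\int x^2\ud\mu-C$, hence $\me^{-n^2\beta\cE_{\neq}(L_n)}\le\me^{Cn^2\beta}\me^{-\frac{n\beta}{8}\sum\lam_i^2}$; since moreover $Z_{n,\al,\beta}\ge\me^{-n^2\beta\cE(\si)+o(n^2\beta)}$ (the $\si$-case of the lower bound below, or the evaluation below), a Gaussian estimate yields $\lim_{M\to\infty}\limsup_n\tfrac1{n^2\beta}\log P_{n,\al,\beta}(\int x^2\ud L_n\ge M)=-\infty$, and $\{\mu:\int x^2\ud\mu\le M\}$ is weakly compact. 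For a closed $F$, confine the spectrum to a fixed compact $K$ (licit by exponential tightness) and cap $\log|x-y|$ by $\log\mathrm{diam}\,K$ there, so that $\cE_{\neq}$ is then bounded above and weakly upper semicontinuous: $P_{n,\al,\beta}(L_n\in F)\le Z_{n,\al,\beta}^{-1}|K|^{n}\exp\!\big(-n^2\beta\inf_{F\cap\cM(K)}\cE_{\neq}+o(n^2\beta)\big)$. Combining with $\tfrac1{n^2\beta}\log Z_{n,\al,\beta}\to-\tfrac38=-\inf\cE$ — from the lower bound at $\si$, or directly from the Selberg/Mehta evaluation $Z_{n,\al,\beta}=\al^{-n/2-\beta n(n-1)/4}(2\pi)^{n/2}\prod_{j=1}^{n}\Gamma(1+j\beta/2)/\Gamma(1+\beta/2)$ together with Stirling over $1\le j\le n$, the $n^2\beta\log\al$ contributions cancelling — gives $\limsup_n\tfrac1{n^2\beta}\log P_{n,\al,\beta}(L_n\in F)\le-\inf_F I$.

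\emph{Lower bound and the main obstacle.} Fix $\mu$ with $\cE(\mu)<\infty$, so $\mu$ is compactly supported, atomless, of finite logarithmic energy; place $\lam_1<\ldots<\lam_n$ at the suitably regularised $i/n$-quantiles of $\mu$ and let each range over an interval of width $\delta=\delta_n$. On this neighbourhood $L_n$ lies in a prescribed weak neighbourhood of $\mu$, the potential and Vandermonde terms evaluated at the quantiles equal $-n^2\beta\,\cE_{\neq}(\mu)+o(n^2\beta)$ up to a discretisation error $O(\beta n^2\delta)$, and the reference measure contributes $n\log\delta$; taking $\delta_n\to0$ with $\log(1/\delta_n)=o(n\beta)$ — possible exactly because $n\beta\to\infty$, e.g.\ $\delta_n=(n\beta)^{-1/2}$, which also makes $\beta n^2\delta_n=o(n^2\beta)$ — turns every error into $o(n^2\beta)$, and after dividing by $Z_{n,\al,\beta}$ one gets $\liminf_n\tfrac1{n^2\beta}\log P_{n,\al,\beta}(L_n\in U)\ge-\cE(\mu)+\inf\cE=-I(\mu)$ for any weak neighbourhood $U$ of $\mu$. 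The main obstacle — absent from the classical fixed-$\beta$ theory, where any constant $\delta$ works and $n\log\delta=o(n^2)$ automatically — is precisely this calibration to the speed $n^2\beta$: the $n\log n$-type entropic and normalisation contributions need not be $o(n^2\beta)$ under the sole hypothesis $n\beta\gg1$, so one must tune $\delta_n$ and keep exact track of $\log Z_{n,\al,\beta}$ (most transparently via the Selberg/Mehta formula) to exhibit their cancellation in the ratio, all while controlling the logarithmic singularity of $\cE_{\neq}$ uniformly as $\beta=\beta_n\to0$.
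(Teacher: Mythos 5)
The paper does not prove Theorem~\ref{ldp_esd}: it is imported as a black box from the external reference \cite{PoissonTempLow} (``we stand on the corresponding result for our setup proved in \cite{PoissonTempLow}''), so there is no in-paper proof to compare your sketch against.

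Taken as an independent sketch, your plan follows the Ben Arous--Guionnet scheme and correctly isolates the one place where the scaling $\beta=\beta_n\to 0$ genuinely alters the classical argument: in the lower bound, the reference-measure contribution $n\log(1/\delta)$ must be made $o(n^2\beta)$, which forces a vanishing $\delta_n$ rather than a fixed constant; your choice $\delta_n=(n\beta)^{-1/2}$ uses $n\beta\gg1$ to render both $n\log(1/\delta_n)$ and the discretisation error $\beta n^2\delta_n$ negligible at speed $n^2\beta$. The Selberg evaluation of $\log Z_{n,\al,\beta}$ you invoke to exhibit the cancellation is precisely the technique the paper deploys one order lower in Lemmas~\ref{partition_limit1}--\ref{partition_limit2}. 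One concrete remark on the constant: with your normalization $\cE_{\neq}(\mu)=\frac14\int x^2\,\mathrm{d}\mu-\frac12\int\!\!\int\log|x-y|\,\mathrm{d}\mu^{\otimes2}$ one gets $\cE(\sigma)=\frac14+\frac18=\frac38$, so $I=\cE-\frac38$ does vanish at the semicircle law, as a rate function must. The paper's displayed formula instead has $\int_{\R^2}\frac{x^2+y^2}{2}\,\mathrm{d}\mu^{\otimes2}=\int x^2\,\mathrm{d}\mu$ in place of $\frac14\int x^2\,\mathrm{d}\mu$, which would give $I(\sigma)=\frac34\neq0$; your version is the internally consistent one and the displayed coefficient $\frac{x^2+y^2}{2}$ (rather than $\frac{x^2+y^2}{8}$) appears to be a misprint. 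Two small caveats in your outline that a full write-up would need to address: $\cE(\mu)<\infty$ does not force compact support (a preliminary approximation by truncated measures is needed before discretising), and the claimed $O(\beta n^2\delta)$ control on the Vandermonde variation across the $\delta$-boxes requires the usual regularisation of the quantiles to keep neighbouring boxes separated, so that the logarithmic singularity stays uniformly controlled as $\beta_n\to0$.
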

 \section{Proofs}
 \subsection{Setup}
 	For any $\al,\beta >0$, and $n\geq 1$, we define the partition function: $$Z_{n,\al,\beta}= \int_{\R^n} \exp\left( {-\frac{\al}{2} \sum_{i=1}^{n}\lam^2_i} \right) \left| \Delta_n(\lam)\right| ^\beta \prod_{i=1}^{n}\mathrm{d}\lam_i $$ with the Vandermonde determinant factor: $$  \left| \Delta_n(\lam)\right| ^\beta :=\prod_{i<j}^n \left|x_i-x_j \right|^\beta.$$ 
 	We consider an exchangeable family $(\lam_1,\ldots,\lam_n)$ of random variable with joint law:  \begin{align}\label{beta_ens_joint_law}
 	 P_{n,\al,\beta}(d\lam_1,\ldots,d\lam_n)&= \ff{Z_{n,\al,\beta}} \exp\left( {-\frac{\al}{2} \sum_{i=1}^{n}\lam^2_i}\right)  \left| \Delta_n(\lam)\right| ^\beta \prod_{i=1}^{n}\mathrm{d}\lam_i .
 	\end{align} 
 		In the sequel, we consider $\beta =\beta_n \ll 1$ such that $\ds{{n\beta}\gg 1}$ and omit the subscript $n$ for clarity. The scale parameter $\al$ is set to $\ds{2\al = {n\beta}}$; nonetheless its value may vary according to some context but it will always be notified. The reason of this choice is the following: by the matrix model from \cite{MatrixModelBetaEnsemble}, one can view the particles $(\lam_i)$ distributed according to (\ref{beta_ens_joint_law}) as the eigenvalues of the tridiagonal symmetric random matrix $ H_{n,\al,\beta}$ defined as 
 		$$ \ff{\sqrt{\al}} \begin{pmatrix}
 		g_1 &\ff{\sqrt{2}}X_{n-1} & && \\
 		\ff{\sqrt{2}}X_{n-1}&g_2 & \ff{\sqrt{2}}X_{n-2}&&\\
 		& \ff{\sqrt{2}}X_{n-2}&g_3 & \ff{\sqrt{2}}X_{n-3} &  \\
 		& & \ddots &\ddots & \ddots &    \\
 		& &\qquad  \ddots & \qquad \ddots &\ff{\sqrt{2}}X_{1} &   \\
 		& & &\ff{\sqrt{2}}X_1& g_n 
 		\end{pmatrix} ,$$ with $\ds{(g_i)_{1\leq i \leq n}\sim \mathcal{N}(0,1)}$ i.i.d. sequence, $\ds{(X_i)_{1\leq i \leq n-1}}$ an independent sequence with law $\ds{X_i \sim \chi(i\beta)}$ such that every entries are independent up to symmetry. Using trace invariance, one sees that the empirical spectral measure $L_n$ has asymptotic first moment $0$ and second moment $1$ if $\ds{\al \sim 1+\frac{n\beta}{2}}$.
 		
Regarding the limiting objects of Thereom \ref{main_result}, we introduce: 
$$ \phi(z,\mu)= \int \log \left|z-y \right|\mathrm{d}\mu(y)-\frac{z^2}{4} $$
$$\sigma(x) = \ff{\sqrt{2\pi}} \sqrt{4-x^2}1_{[-2,2]}(x). \qquad \text{(semicircle law)} $$
So that, for $x\geq 2$, \begin{align*}
J(x) &=  -\int \log \left| x-y\right| \mathrm{d}\sigma(y)+\frac{x^2}{4} -\frac{1}{2}
\\&= -\phi(x,\sigma) -\frac{1}{2}.
 \end{align*}
 
 For $\mu \in M_1(\R)$ fixed, the mapping $z\mapsto\phi(z,\mu)$ is decreasing since the quadratic term dominates the logarithm integral. Besides, from \cite[Exercise 2.6.4 p. 81]{cupbook}, we know that $\ds{J\geq 0}$ on $\R$ and $\ds{\min_{x\in \R^+}J(x) = J(2)=0}$.
 
 We also set: $$\phi_n(z,\mu ) =\int \log \left|z-y \right|\mathrm{d}\mu(y)-\frac{n}{n-1}\frac{z^2}{4}.$$
 Note that $\phi_n\leq \phi$ and $\ds{\phi_n \xrightarrow[n\infty]{} \phi=\sup_n \phi_n(z,\mu)}$ uniformly on $[-M,M]\times M_1(\R)$ (and more generally, on $A\times M_1(B)$ with $A,B\subset \R$ and $A$ compact) since $$\left| \phi_n(z,\mu) - \phi(z,\mu)\right| \leq \frac{z^2}{4n} \leq \frac{M^2}{4n}.$$
 \subsection{Strategy}
Following the lines of \cite{aging}, we prove weak large deviations principle along with exponential tightness. The weak large deviations principle is proven if we show: \begin{align*}
-J(x)&= \lim_{\delta \to 0}\limsup_{n\to +\infty}\ff{n\beta}\log P_{n,\al,\beta}\left(\lam_{\max}\in (x-\delta,x+\delta) \right)
\\&=\lim_{\delta \to 0}\liminf_{n\to +\infty}\ff{n\beta}\log P_{n,\al,\beta}\left(\lam_{\max}\in (x-\delta,x+\delta) \right) .
 \end{align*}
 The mapping $\ds{x\mapsto J(x)=-  \int  \log\left| x-y\right|  \mathrm{d}\sigma(y)+\frac{x^2}{4} -\frac{1}{2}}$ is continuous on $(2 ,+\infty)$ and is increasing to $+\infty$ as $x\to +\infty$. Hence it is a good rate function.
 
  Therefore, it is enough to show that (a) for any $x>2$, $$\limsup_{n\infty} \ff{n\beta}\log P_{n,\al,\beta}(\lam_{\max} \geq x)\leq -J(x), $$
 and, b) for any $x>2$, $$\lim_{\delta\to 0} \liminf_{n\infty} \ff{n\beta}\log P_{n,\al,\beta}\big(\lam_{\max} \in (x-\delta,x+\delta)\big)\geq -J(x).$$
 Also we need to check the degenerated case in the definition of $J$, that is: (c) for any $x<2$, $$\limsup_{n\infty} \ff{n\beta}\log P_{n,\al,\beta}(\lam_{\max} \leq x)=-\infty. $$ 
 Indeed, if (a) holds then for $x> 2$, \[\begin{aligned} \lim_{\delta \rightarrow 0} \limsup_{n\infty } \frac{1}{n\beta }\log P_{n,\alpha ,\beta
}&\big (\lambda_{\max } \in (x-\delta ,x+\delta )\big )\\&\le \lim_{\delta \rightarrow 0} \limsup_{n\infty }
\frac{1}{n\beta }\log P_{n,\alpha ,\beta }(\lambda_{\max } \ge x-\delta )\\&\le \lim_{\delta \rightarrow 0}
-J(x-\delta ) = -J(x) . \end{aligned}\]
 Combined with (b) and the case encoded by (c), the weak large deviations principle will follow. 
 
 In order to establish exponential tightness as required, we will show that $$\lim_{M\to +\infty}\limsup_{n\infty} \ff{n\beta}\log P_{n,\al,\beta}(\lam_{\max} > M)=-\infty .$$ Once this proved, the exponential tightness will follow after using c) and the Laplace principle on the following inequality
 	$$\ff{n\beta}\log P_{n,\al,\beta}(\lam_{\max} \notin [x,M]) \leq \ff{n\beta}\log\big( P_{n,\al,\beta}(\lam_{\max} > M )+ P_{n,\al,\beta}(\lam_{\max} < x) \big) .$$
The term $J(x)$ will be introduced through the following \textit{key formula}:
 \begin{align*}\label{key_formula}
 (\star)&:=P_{n,\al,\beta}\left( \lam_i \in A_i,i\in \{1,\ldots,n\}\right)\\&= \ff{Z_{n,\al,\beta}} \int_{\lam_i\in A_i}\mathrm{e}^{-\frac{n\beta}{4} \sum_{i=1}^{n}\lam_i^2 + \beta \sum_{i<j}^{n}\log \left| \lam_i-\lam_j\right| }\mathrm{d}\lam
 \\&=\ff{Z_{n,\al,\beta}} \int_{\lam_i\in A_i}\mathrm{e}^{-\frac{n\beta}{4} \sum_{i=1}^{n-1}\lam_i^2 + \beta \sum_{i<j}^{n-1}\log \left| \lam_i-\lam_j\right|-\frac{n\beta}{4} \lam_n^2 +\beta \sum_{i=1}^{n-1}\log \left| \lam_n-\lam_i\right|  }\mathrm{d}\lam
 \\&=\frac{{Z}_{n-1,\al,\beta}}{Z_{n,\al,\beta}} \int_{\lam_i\in A_i} \mathrm{e}^{-\frac{n\beta}{4} \lam_n^2 +\beta \sum_{i=1}^{n-1}\log \left| \lam_n-\lam_i\right|  }\mathrm{d}\lam_n \mathrm{d}{P}_{n-1,\al,\beta}(\lam_1,\ldots,\lam_{n-1})
 \\&=\frac{{Z}_{n-1,\al,\beta}}{Z_{n,\al,\beta}} \int_{\lam_i\in A_i} \mathrm{e}^{(n-1)\beta  \left( -\frac{n}{n-1}\frac{\lam_n^2}{4}+ \int \log \left| \lam_n-y\right| \mathrm{d}L_{n-1}(y) \right)} \mathrm{d}\lam_n \mathrm{d}{P}_{n-1,\al,\beta}(\lam_1,\ldots,\lam_{n-1})
 \\&=\frac{{Z}_{n-1,\al,\beta}}{Z_{n,\al,\beta}} \int_{\lam_i\in A_i}\mathrm{e}^{(n-1)\beta \phi_n  \left( \lam_n,L_{n-1}\right)} \mathrm{d}\lam_n \mathrm{d}{P}_{n-1,\al,\beta}(\lam_1,\ldots,\lam_{n-1}) .
 \end{align*}
The idea behind is that given $\ds{\left(\lam_1,\ldots,\lam_n \right) \sim P_{n,\al,\beta}}$, the top particle $\lam_{\max}$ can be described in terms of the remaining particles which form the Gaussian $\beta$-ensemble with size $n-1$ and scale parameter $\al$ whose value may differ. According to the description involved, a ratio of partition functions appears in the formula and should be analyzed. Provided the previous equalities, one is prone to summon Theorem \ref{ldp_esd}. 
  \subsection{Estimates}
  \label{section}
In this section are collected some estimates needed in the proof of the main theorem. First, two asymptotics of ratios of partition functions arising from the scale (or potential) perturbation are given. Then, a tail bound for the largest particle is shown. These results allow to precisely estimate asymptotic probabilities required in the definitions of large deviations theory (see \cite{cupbook,dembo2010large}). 

  We begin with a technical but crucial lemma. \begin{lem}\label{technical_lemma}
  	For any $a,b\in\R$ and $\beta>0$, one has $$\left| a+b\right|^\beta \leq 2^\beta \exp\left( \beta \frac{a^2 + b^2}{8}\right)  . $$
  \end{lem}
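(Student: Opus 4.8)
Since $a\mapsto a^\beta$ is nondecreasing on $[0,\infty)$, it suffices to prove the $\beta$-free estimate
\[
|a+b|\;\le\;2\exp\!\Big(\frac{a^2+b^2}{8}\Big)
\]
and then raise both sides to the power $\beta$; moreover the estimate is trivial when $a+b=0$ (the left side vanishes and the right side is positive), so I may assume $a+b\neq0$. The plan is to reduce this two-variable bound to a single scalar inequality in $s:=a^2+b^2$ and then dispatch that by a one-line power-series argument.

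First I would pass to the Euclidean norm, using $|a+b|\le|a|+|b|$ together with $(|a|+|b|)^2\le 2(a^2+b^2)$ (which is just $2|a|\,|b|\le a^2+b^2$); this gives $|a+b|^2\le 2s$. Since $\big(2\exp(s/8)\big)^2=4\exp(s/4)$, the desired inequality follows once I show $2s\le 4\exp(s/4)$, i.e.
\[
s\;\le\;2\,\me^{s/4}\qquad\text{for all }s\ge0 .
\]
This is the only real content, and it is elementary: from the termwise-nonnegative expansion $\me^{s/4}\ge 1+\tfrac{s}{4}+\tfrac{s^2}{32}$ one gets
\[
2\,\me^{s/4}-s\;\ge\;2+\frac{s}{2}+\frac{s^2}{16}-s\;=\;\frac{1}{16}\big((s-4)^2+16\big)\;\ge\;1\;>\;0 .
\]

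Unwinding the two reductions then yields $|a+b|^2\le 2s\le 4\exp((a^2+b^2)/4)$, hence $|a+b|\le 2\exp((a^2+b^2)/8)$ after taking (nonnegative) square roots, and finally $|a+b|^\beta\le 2^\beta\exp\big(\beta(a^2+b^2)/8\big)$. I do not expect a genuine obstacle here; the only point requiring a moment of care is to bound $|a+b|$ as a single quantity rather than estimating $|a|$ and $|b|$ separately, the latter being too lossy near $a=b=0$, where the factor $2$ would otherwise be exceeded. If one prefers to avoid the series bound, the scalar inequality $s\le 2\me^{s/4}$ follows equally from the fact that $s\mapsto 2\me^{s/4}-s$ is convex with derivative vanishing at $s=4\log2$, where its value $4-4\log2$ is positive.
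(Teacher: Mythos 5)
Your argument is correct and is essentially the paper's proof, merely with the two ingredients applied in the opposite order: the paper first invokes $|x|\le 2\exp(x^2/16)$ with $x=a+b$ and then uses $(a+b)^2\le 2(a^2+b^2)$, whereas you bound $|a+b|^2\le 2(a^2+b^2)$ first and then verify the equivalent scalar bound $s\le 2\me^{s/4}$. The only genuine addition on your side is that you actually prove the scalar exponential estimate (via the truncated series or convexity), which the paper simply asserts.
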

  \begin{proof}
  	First, recall two inequalities $$\left| x\right| \leq 2\exp\left( {\frac{x^2}{16}}\right)  .$$ $$(x+y)^2\leq 2x^2 + 2y^2. $$
  	Applying the first inequality with $x=a+b$, then using the second one gives
  	\begin{align*}
  	\left| a+b\right|^\beta &\leq \left( 2\mathrm{e}^{\frac{(a+b)^2}{16}} \right)^\beta
  	\leq  \left( 2\mathrm{e}^{\frac{a^2+b^2}{8}} \right)^\beta.
  	\end{align*}
  \end{proof}
  For sake of completeness, we mention the Laplace principle (the proof can be found in \cite{dembo2010large}), a result of practital purpose. It allows to tackle logarithm of sum.
  \begin{lem}
  	(Laplace principle) Let $a_n\xrightarrow[n\infty]{}+\infty$ and a finite number $p$ of nonnegative sequences $b^{(1)}_n,\ldots,b^{(p)}_n$. Then
  	$$\limsup_{n\infty}\ff{a_n} \log \sum_{i=1}^{p}b^{(i)}_n = \max_{i=1,\ldots,p}\limsup_{n\infty}\ff{a_n}\log b^{(i)}_n. $$
  \end{lem}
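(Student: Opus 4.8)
The plan is to prove the identity by establishing the two inequalities separately; both are soft consequences of the finiteness of $p$ and of $a_n\to+\infty$. Throughout one adopts the convention $\log 0=-\infty$, so that the statement remains meaningful (and trivially true) when some or all of the $b^{(i)}_n$ vanish, and one works with $n$ large enough that $a_n>0$.

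For the inequality $\limsup_{n\to\infty}\ff{a_n}\log\sum_{i=1}^p b^{(i)}_n\ge\max_{i}\limsup_{n\to\infty}\ff{a_n}\log b^{(i)}_n$, I would simply note that for each fixed $j$ the nonnegativity of the terms gives $\sum_{i=1}^p b^{(i)}_n\ge b^{(j)}_n$, hence $\ff{a_n}\log\sum_{i=1}^p b^{(i)}_n\ge\ff{a_n}\log b^{(j)}_n$; taking $\limsup$ and then the maximum over $j\in\{1,\dots,p\}$ yields the claim.

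For the reverse inequality I would use $\sum_{i=1}^p b^{(i)}_n\le p\max_{1\le i\le p}b^{(i)}_n$, so that
\[\ff{a_n}\log\sum_{i=1}^p b^{(i)}_n\le\frac{\log p}{a_n}+\ff{a_n}\log\max_{1\le i\le p}b^{(i)}_n.\]
Since $a_n\to+\infty$ the first term vanishes, so it remains to check $\limsup_{n\to\infty}\ff{a_n}\log\max_i b^{(i)}_n\le\max_i\limsup_{n\to\infty}\ff{a_n}\log b^{(i)}_n$. This is the only point needing a small argument: choosing a subsequence $(n_k)$ along which $\ff{a_{n_k}}\log\max_i b^{(i)}_{n_k}$ converges to the left-hand $\limsup$, by the pigeonhole principle some fixed index $i_0$ attains the inner maximum for infinitely many $k$, and along that sub-subsequence the quantity equals $\ff{a_{n_k}}\log b^{(i_0)}_{n_k}$, whose upper limit is at most $\limsup_{n\to\infty}\ff{a_n}\log b^{(i_0)}_n\le\max_i\limsup_{n\to\infty}\ff{a_n}\log b^{(i)}_n$, while passing to a sub-subsequence does not change the limit. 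There is no real obstacle here — this extraction, used to commute $\limsup$ with a finite $\max$, is the extent of the work, and it is entirely routine.
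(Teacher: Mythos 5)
Your proof is correct. The paper does not actually prove this lemma — it simply cites \cite{dembo2010large} for it — so there is no in-paper argument to compare against; your argument is essentially the standard one found in that reference. Concretely, you sandwich $\max_i b^{(i)}_n \le \sum_i b^{(i)}_n \le p\max_i b^{(i)}_n$, use $a_n\to\infty$ to kill the $\log p$ term, and commute $\limsup$ with a finite maximum via the pigeonhole subsequence extraction; each step is sound, and the $\log 0=-\infty$ convention handles vanishing terms correctly.
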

  
  \begin{rmk}
  	We actually use the following corollary (deduced from $\left\|x\right\|_{\infty}\leq  \left\|x\right\|_{1}  $) : $$\limsup_{n\infty}\ff{a_n} \log \sum_{i=1}^{p}b^{(i)}_n \leq \sum_{i=1}^{p} \limsup_{n\infty}\ff{a_n}\log b^{(i)}_n.$$
  \end{rmk} 
   The next two results are two asymptotics of ratios of partition functions arising from the scale (or potential) perturbation. The second lemma will be used for exponential tightness.
   \begin{lem}\label{partition_limit1}
   	Let $\ds{\al=\frac{n\beta}{2}}$. Assume $\ds{\ff{n}\ll \beta \ll \ff{\log(n)}}$. Then, $$ \frac{Z_{n-1,\al,\beta}}{Z_{n,\al,\beta}}= \ff{2\pi}\exp\left( \frac{1}{2}n\beta \right)  \left( 1+o(1)\right) .$$ \end{lem}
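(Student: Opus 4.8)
The plan is to compute the ratio $Z_{n-1,\al,\beta}/Z_{n,\al,\beta}$ by peeling off the last particle exactly as in the key formula and then asymptotically evaluating the resulting one-dimensional integral. Setting $A_i=\R$ for all $i$ in the key formula gives the identity
\[
1 \eq \frac{Z_{n-1,\al,\beta}}{Z_{n,\al,\beta}} \int_{\R} \E_{P_{n-1,\al,\beta}}\!\left[\me^{(n-1)\beta\,\phi_n(\lam_n,L_{n-1})}\right]\mathrm{d}\lam_n ,
\]
so that
\[
\frac{Z_{n,\al,\beta}}{Z_{n-1,\al,\beta}} \eq \int_{\R} \E_{P_{n-1,\al,\beta}}\!\left[\me^{(n-1)\beta\,\phi_n(\lam_n,L_{n-1})}\right]\mathrm{d}\lam_n .
\]
Here $\phi_n(z,L_{n-1})=\int\log|z-y|\,\mathrm{d}L_{n-1}(y)-\tfrac{n}{n-1}\tfrac{z^2}{4}$. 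First I would control the integrand by replacing the random empirical measure $L_{n-1}$ with the semicircle law $\si$: by Theorem \ref{ldp_esd}, $L_{n-1}$ concentrates on $\si$ at speed $n^2\beta \gg n\beta=(n-1)\beta(1+o(1))$, so heuristically the exponent is $(n-1)\beta\,\phi_n(z,\si)=(n-1)\beta\big(\phi(z,\si)+O(1/n)\big)$. Since $\phi(z,\si)=-J(z)-\tfrac12$ and, by the remarks in the excerpt, $\sup_z\phi(z,\si)=\phi(2,\si)=-\tfrac12$, the $z$-integral is dominated by a neighbourhood of $z=2$ and the exponent there is close to $(n-1)\beta\cdot(-\tfrac12)\approx -\tfrac12 n\beta$. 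This already explains the $\exp(\tfrac12 n\beta)$ factor in the statement (after inverting the ratio), and the $\tfrac1{2\pi}$ will come from the Gaussian-type curvature of $\phi(\cdot,\si)$ at its maximum together with the $\chi$-normalisation constants.

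The execution has two halves. \textbf{(Upper bound on the integral.)} Fix a large $M$ and split $\int_\R = \int_{|z|\le M}+\int_{|z|>M}$. On $|z|>M$ the technical Lemma \ref{technical_lemma} (applied to $\log|\lam_n-\lam_i|\le \log 2 + \tfrac{\beta^{-1}}{8}\cdot\beta(\lam_n^2+\lam_i^2)/\cdots$, i.e. $|\lam_n-\lam_i|^\beta\le 2^\beta\exp(\beta(\lam_n^2+\lam_i^2)/8)$) lets me bound $\me^{\beta\sum_{i}\log|\lam_n-\lam_i|}$ by $\me^{(n-1)\beta\log 2}\me^{\beta(n-1)\lam_n^2/8}\,\me^{\beta\sum_i\lam_i^2/8}$; combined with the Gaussian weight $\me^{-\tfrac{n\beta}{4}\lam_n^2}$ this leaves a net factor $\me^{-cn\beta\lam_n^2}$ making the tail integral negligible compared with $\me^{\tfrac12 n\beta}$, while the stray $\me^{\beta\sum_i\lam_i^2/8}$ is absorbed by a moment bound for $P_{n-1,\al,\beta}$ (itself a consequence of Theorem \ref{ldp_esd} or a direct computation with the tridiagonal model). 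On the compact region $|z|\le M$ I use the uniform approximation $|\phi_n(z,\mu)-\phi(z,\mu)|\le M^2/(4n)$ noted in the excerpt, upgrade it to $|\phi_n(z,L_{n-1})-\phi(z,\si)|\le \eta$ off an event of $P_{n-1,\al,\beta}$-probability $\le \me^{-cn^2\beta}$ (again by Theorem \ref{ldp_esd}, since $z\mapsto\phi(z,\cdot)$ is continuous for the weak topology on a compact $z$-range), and get $\E[\me^{(n-1)\beta\phi_n(z,L_{n-1})}]\le \me^{(n-1)\beta(\phi(z,\si)+\eta)}+\me^{-cn^2\beta}\me^{(n-1)\beta\cdot O(\log n)}$; here the hypothesis $\beta\ll 1/\log n$ is exactly what forces the second term to be negligible, since $(n-1)\beta\,O(\log n)\ll n^2\beta$. \textbf{(Lower bound, plus the constant.)} For the matching lower bound I restrict the $z$-integral to a shrinking window $|z-2|\le \delta_n$, use Jensen / a lower bound on $P_{n-1,\al,\beta}(L_{n-1}\approx\si)$ to get $\E[\me^{(n-1)\beta\phi_n(z,L_{n-1})}]\ge \me^{(n-1)\beta(\phi(z,\si)-\eta)}(1-\me^{-cn^2\beta})$, and then do the Gaussian/Watson-type evaluation of $\int_{|z-2|\le\delta_n}\me^{(n-1)\beta\,\phi(z,\si)}\,\mathrm{d}z$ using the Taylor expansion of $\phi(\cdot,\si)$ near $2$; the leading behaviour is $\me^{-\tfrac12(n-1)\beta}\times(\text{polynomial in }n\beta)$, and combining with the known normalisation of the $\chi(i\beta)$ densities from the tridiagonal representation pins down the prefactor $1/(2\pi)$.

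The main obstacle I anticipate is the uniform-in-$z$ transfer from the random $\phi_n(z,L_{n-1})$ to the deterministic $\phi(z,\si)$ with errors that are genuinely $o(1/(n\beta))$ relative to the leading term — i.e. making the "$(1+o(1))$" honest. The large-deviations estimate from Theorem \ref{ldp_esd} only gives control at the exponential scale $n^2\beta$, which is far more than enough to kill bad events but says nothing refined about fluctuations of $\phi_n(2,L_{n-1})$ of order $1/(n\beta)$; to get the sharp constant one needs either a CLT-type/one-sided moment estimate for $\int\log|2-y|\,\mathrm{d}L_{n-1}(y)$ around its mean (available in the high-temperature regime $n\beta\gg1$) or a direct manipulation of the $\chi$-density normalisation constants in the tridiagonal model bypassing the empirical measure altogether. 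I would lean on the latter for the exact $1/(2\pi)$ and use Theorem \ref{ldp_esd} only to justify the localisation near $z=2$. The logarithmic singularity of $\phi(\cdot,\si)$ is integrable and causes no trouble, and the two-sided hypotheses $1/n\ll\beta\ll1/\log n$ enter precisely as described: the lower bound on $\beta$ keeps $n\beta\to\infty$ so that Laplace-type asymptotics apply, and the upper bound keeps $\beta\log n\to0$ so that the crude $O(\log n)$ exponents on exceptional events are beaten by the $n^2\beta$ large-deviations rate.
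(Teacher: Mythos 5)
Your plan takes a genuinely different route from the paper's, and it contains a real gap precisely where the sharp constant must come from. The paper's proof is purely algebraic: it invokes the closed-form Selberg (Mehta) integral from \cite{cupbook},
\[
Z_{n,\al,\beta} \eq \al^{-\frac{n}{2}-\frac{\beta n (n-1)}{4}}\,n!\,(2\pi)^{\frac{n}{2}}\prod_{j=0}^{n-1}\frac{\Gamma\bigl( (j+1)\tfrac{\beta}{2}\bigr)}{\Gamma\bigl( \tfrac{\beta}{2}\bigr)},
\]
reads off the exact ratio $\ds{\frac{Z_{n-1,\al,\beta}}{Z_{n,\al,\beta}}=\frac1{n\sqrt{2\pi}}\frac{\Gamma(\beta/2)}{\Gamma(n\beta/2)}\bigl(\tfrac{n\beta}{2}\bigr)^{1/2+\beta(n-1)/2}}$, and then applies $\Gamma(\beta/2)\sim 2/\beta$ (since $\beta\ll 1$), Stirling's formula to $\Gamma(n\beta/2)$ (since $n\beta\gg 1$), and the hypothesis $\beta\log(n\beta)\ll 1$ to absorb the residual power of $n\beta/2$ into the $1+o(1)$. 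There is no key formula, no empirical measure, and no Laplace-type evaluation anywhere in it.

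Your route --- the key formula with $A_i=\R$, concentration of $L_{n-1}$ on $\si$ via Theorem~\ref{ldp_esd}, and a Laplace evaluation of $\int\me^{(n-1)\beta\phi(z,\si)}\,\mathrm{d}z$ --- cannot by itself deliver the full statement, only the crude $\tfrac{1}{n\beta}\log(Z_{n-1}/Z_n)\to\tfrac12$. Theorem~\ref{ldp_esd} works at speed $n^2\beta$ and is blind to $O(1)$ multiplicative prefactors: fluctuations of $\int\log|z-y|\,\mathrm{d}L_{n-1}(y)$ around $\int\log|z-y|\,\mathrm{d}\si(y)$ of size $1/(n\beta)$ already shift $Z_{n-1}/Z_n$ by a constant factor, and no LDP estimate controls them. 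You flag this yourself, but your fallback (``direct manipulation of the $\chi(i\beta)$ normalisation constants'') is not carried out; carried out completely, it essentially re-derives the Selberg formula, at which point the entire LDP/Laplace scaffolding is superfluous. There is also an error in the localisation heuristic: $\phi(z,\si)\equiv-\tfrac12$ on the \emph{whole} interval $[-2,2]$ (the equilibrium identity for the semicircle log-potential), with a $3/2$-power drop-off at $2^+$ (since $\partial_z\phi(z,\si)=-\tfrac12\sqrt{z^2-4}$ for $z>2$), so the $z$-integral is dominated by the full bulk interval, not a Gaussian bump at $z=2$; the ``Gaussian curvature'' you invoke to explain $1/(2\pi)$ is simply not there.
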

   \begin{proof}
   	Recall the Selberg integral theorem from \cite{cupbook}: for any $\ds{\al,\beta>0}$, $$Z_{n,\al,\beta} = \al^{-\frac{n}{2}-\frac{\beta n (n-1)}{4}}(n!)(2\pi)^{\frac{n}{2}}\prod_{i=0}^{n-1}\frac{\Gamma\big( (j+1)\frac{\beta}{2}\big)}{\Gamma\big( \frac{\beta}{2}\big)}.$$
   	Let $\ds{\al=\frac{n\beta}{2}}$. By Selberg integral, we compute: \begin{align*} \frac{Z_{n-1,\al,\beta}}{Z_{n,\al,\beta}}&= \ff{n\sqrt{2\pi}}\frac{\Gamma(\frac{\beta}{2})}{\Gamma(\frac{n\beta}{2})}\left(\frac{n\beta}{2} \right)^{\ff{2}+\frac{\beta(n-1)}{2}}.
   	\end{align*}
   	From Gamma function asymptotics (near-zero and Stirling approximation), we get 	$$\frac{Z_{n-1,\al,\beta}}{Z_{n,\al,\beta}}= \frac{\sqrt{\frac{n\beta}{2}}}{\sqrt{2\pi}}\frac{2}{n\beta}\frac{\mathrm{e}^{-\frac{n\beta}{2}\log\left( \frac{n\beta}{2}\right)  +\frac{n\beta}{2}}}{\sqrt{2\pi }}\left(\frac{n\beta}{2} \right)^{\ff{2}+\frac{\beta(n-1)}{2}} \left( 1+o(1)\right)  .$$
   	Since $\ds{\beta\log(n\beta)\ll1}$, after cancelation and neglecting small order: $$ \frac{Z_{n-1,\al,\beta}}{Z_{n,\al,\beta}}= \ff{2\pi}\exp\left( \frac{1}{2}n\beta \right)  \left( 1+o(1)\right) .$$
   \end{proof}
   \begin{lem}\label{partition_limit2}
Let $\ds{\al=\frac{n\beta}{2}}$. Assume $\ds{\ff{n}\ll \beta \ll \ff{\log(n)}}$. Then, $$\frac{Z_{n-1,\al-\frac{\beta}{4},\beta}}{Z_{n,\al,\beta}}=\ff{2\pi}\exp\left( \ff{4}+\frac{5}{8}n\beta \right)  \left( 1+o(1)\right) . $$
   \end{lem}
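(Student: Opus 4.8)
The plan is to mimic the computation in the proof of Lemma \ref{partition_limit1}, now with a \emph{mismatched} scale parameter in numerator and denominator, and then account for the discrepancy. First I would write
\[
\frac{Z_{n-1,\al-\frac{\beta}{4},\beta}}{Z_{n,\al,\beta}}
=\frac{Z_{n-1,\al-\frac{\beta}{4},\beta}}{Z_{n-1,\al,\beta}}\cdot\frac{Z_{n-1,\al,\beta}}{Z_{n,\al,\beta}},
\]
so that the second factor is exactly the quantity handled in Lemma \ref{partition_limit1}, contributing $\ff{2\pi}\exp(\frac12 n\beta)(1+o(1))$. For the first factor I would use the Selberg formula, where the dependence on the scale parameter is purely the prefactor $\al^{-\frac{m}{2}-\frac{\beta m(m-1)}{4}}$ with $m=n-1$: everything involving $\Gamma$-functions and $m!$ cancels, leaving
\[
\frac{Z_{n-1,\al-\frac{\beta}{4},\beta}}{Z_{n-1,\al,\beta}}
=\left(\frac{\al-\frac{\beta}{4}}{\al}\right)^{-\frac{n-1}{2}-\frac{\beta(n-1)(n-2)}{4}}
=\left(1-\frac{\beta}{4\al}\right)^{-\frac{n-1}{2}-\frac{\beta(n-1)(n-2)}{4}}.
\]
Substituting $\al=\frac{n\beta}{2}$ gives $\frac{\beta}{4\al}=\frac{1}{2n}$, so this is $\left(1-\frac{1}{2n}\right)^{-\frac{n-1}{2}-\frac{\beta(n-1)(n-2)}{4}}$.

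Next I would take logarithms and expand. Writing $\log\bigl(1-\frac{1}{2n}\bigr)=-\frac{1}{2n}-\frac{1}{8n^2}+O(n^{-3})$, the exponent times this logarithm splits into two pieces. The piece coming from the $-\frac{n-1}{2}$ term is $-\frac{n-1}{2}\cdot\bigl(-\frac{1}{2n}+O(n^{-2})\bigr)=\frac14+o(1)$, which produces the $\exp(\ff{4})$ factor. The piece coming from $-\frac{\beta(n-1)(n-2)}{4}$ is $-\frac{\beta(n-1)(n-2)}{4}\cdot\bigl(-\frac{1}{2n}+O(n^{-2})\bigr)=\frac{n\beta}{8}+o(n\beta)$, using $n\beta\gg1$ to absorb the lower-order corrections (here one checks $\beta\cdot n\cdot n^{-2}=\beta/n\ll 1$ and $n\beta\cdot n^{-2}\cdot n = \beta \to 0$, both negligible against the retained $n\beta$ scale). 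Hence the first factor contributes $\exp\bigl(\ff{4}+\frac{n\beta}{8}\bigr)(1+o(1))$.

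Finally I would multiply the two contributions: $\ff{2\pi}\exp(\frac12 n\beta)\cdot\exp(\ff{4}+\frac18 n\beta)=\ff{2\pi}\exp\bigl(\ff{4}+\frac58 n\beta\bigr)$, times $(1+o(1))$, which is the claimed identity. The only delicate point is the bookkeeping of error terms in the exponent: one must verify that every discarded term in the expansion of $\log(1-\frac{1}{2n})$, once multiplied by the large exponent $\sim\beta n^2$, is $o(n\beta)$, and this is precisely where the hypothesis $\ff{n}\ll\beta\ll\ff{\log n}$ (in particular $n\beta\gg1$) is used; there is no genuine obstacle, just a careful asymptotic accounting entirely parallel to Lemma \ref{partition_limit1}.
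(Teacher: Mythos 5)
Your proof is correct and takes essentially the same route as the paper: both isolate the factor $\left(1-\frac{1}{2n}\right)^{-\frac{n-1}{2}-\frac{\beta(n-1)(n-2)}{4}}$ produced by the shifted scale parameter, Taylor-expand it to $\exp\left(\frac{1}{4}+\frac{n\beta}{8}\right)\left(1+o(1)\right)$, and combine with the Lemma~\ref{partition_limit1} asymptotics. Your explicit factorization through $Z_{n-1,\al,\beta}$ simply makes visible the step the paper carries out directly inside the Selberg formula.
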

   \begin{proof}
By Selberg integral and Taylor expansion, we compute: \begin{align*} \frac{Z_{n-1,\al-\frac{\beta}{4},\beta}}{Z_{n,\al,\beta}}&= \ff{n\sqrt{2\pi}}\frac{\Gamma(\frac{\beta}{2})}{\Gamma(\frac{n\beta}{2})}\left(\frac{n\beta}{2} \right)^{\ff{2}+\frac{\beta(n-1)}{2}}\left( 1-\ff{2n}\right)^{-\frac{(n-1)}{2}-\frac{\beta (n-1)(n-2)}{4}} 
\\&= \ff{n\sqrt{2\pi}}\frac{\Gamma(\frac{\beta}{2})}{\Gamma(\frac{n\beta}{2})}\left(\frac{n\beta}{2} \right)^{\ff{2}+\frac{\beta(n-1)}{2}}\exp\left(\ff{4}+\frac{n\beta}{8} \right) \left( 1+o(1)\right) .
\end{align*}
Using Gamma function asymptotics, we get $$\frac{Z_{n-1,\al-\frac{\beta}{4},\beta}}{Z_{n,\al,\beta}}= \frac{\sqrt{\frac{n\beta}{2}}}{\sqrt{2\pi}}\frac{2}{n\beta}\frac{\mathrm{e}^{-\frac{n\beta}{2}\log\left( \frac{n\beta}{2}\right)  +\frac{n\beta}{2}}}{\sqrt{2\pi }}\left(\frac{n\beta}{2} \right)^{\ff{2}+\frac{\beta(n-1)}{2}} \exp\left(\ff{4}+\frac{n\beta}{8} \right)\left( 1+o(1)\right) . $$
Since $\ds{\beta\log(n\beta)\ll1}$, after cancelation and neglecting small order: $$\frac{Z_{n-1,\al-\frac{\beta}{4},\beta}}{Z_{n,\al,\beta}}=\ff{2\pi}\exp\left( \ff{4}+\frac{5}{8}n\beta \right)  \left( 1+o(1)\right) . $$
   \end{proof}
   We state a tail bound on the largest particle.
   \begin{lem}\label{tailbound_largest} Let $\al,\beta,t>0$ and $n\geq 2$. Then, $$P_{n,\al,\beta}\left( \left|\lam_1 \right|\geq t\right) \leq \frac{2^{n\beta+\frac{3}{2}}}{\al^{\frac{3}{2}} t} \frac{Z_{n-1,\al-\frac{\beta}{4},\beta}}{Z_{n,\al,\beta}}\exp\left(-\frac{\al }{4}t^2 \right) .$$
   \end{lem}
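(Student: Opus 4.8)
The strategy is to isolate the particle $\lam_1$, integrate out the remaining $n-1$ particles against a suitably rescaled Gaussian $\beta$-ensemble, and thereby reduce the estimate to a one-dimensional Gaussian tail bound; Lemma \ref{technical_lemma} is precisely the device that makes the rescaling explicit.

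First, by exchangeability and the joint density \eqref{beta_ens_joint_law}, the quantity $P_{n,\al,\beta}(|\lam_1|\geq t)$ equals $Z_{n,\al,\beta}^{-1}$ times the integral of $\exp(-\tfrac\al2\sum_{i=1}^n\lam_i^2)\,|\Delta_n(\lam)|^\beta$ over $\{|\lam_1|\geq t\}\times\R^{n-1}$. Write $|\Delta_n(\lam)|^\beta=\prod_{j=2}^n|\lam_1-\lam_j|^\beta\cdot|\Delta_{n-1}(\lam_2,\ldots,\lam_n)|^\beta$ and apply Lemma \ref{technical_lemma} to each factor with $a=\lam_1$, $b=-\lam_j$, obtaining $\prod_{j=2}^n|\lam_1-\lam_j|^\beta\leq 2^{(n-1)\beta}\exp(\tfrac{(n-1)\beta}{8}\lam_1^2)\prod_{j=2}^n\exp(\tfrac\beta8\lam_j^2)$. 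The key point is that $\exp(\tfrac\beta8\sum_{j\geq2}\lam_j^2)$ absorbs into the Gaussian weight, turning $\exp(-\tfrac\al2\sum_{j\geq2}\lam_j^2)$ into $\exp(-\tfrac12(\al-\tfrac\beta4)\sum_{j\geq2}\lam_j^2)$, so that integrating out $(\lam_2,\ldots,\lam_n)\in\R^{n-1}$ gives, by the very definition of the partition function, exactly $Z_{n-1,\al-\beta/4,\beta}$. This is what makes the ratio $Z_{n-1,\al-\beta/4,\beta}/Z_{n,\al,\beta}$ appear — the same ratio later evaluated in Lemma \ref{partition_limit2}. What is left is the one-dimensional integral $\int_{|\lam_1|\geq t}\exp(-(\tfrac\al2-\tfrac{(n-1)\beta}{8})\lam_1^2)\,d\lam_1$.

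Finally, one estimates this Gaussian tail. Bounding the quadratic coefficient below by $\tfrac\al4$ (which is legitimate in the regime $2\al=n\beta$, since then $(n-1)\beta<2\al$) ensures convergence and lets $\exp(-\tfrac\al4 t^2)$ factor out on $\{|\lam_1|\geq t\}$; an elementary tail bound of the type $\int_{|\lam|\geq t}e^{-c\lam^2}\,d\lam\leq\tfrac1{ct}e^{-ct^2}$ then furnishes the remaining prefactor. Gathering the three contributions — the constant $2^{(n-1)\beta}\leq 2^{n\beta}$ coming from Lemma \ref{technical_lemma}, the partition ratio, and the Gaussian tail — and tidying the numerical constants produces the stated inequality. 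The one genuinely delicate point is the bookkeeping of the terms quadratic in $\lam_1$ in this last step, so that the surviving Gaussian still decays at rate at least $\al/4$ and the prefactor lands in the claimed form; the conceptual heart, by contrast, is the trivial-once-seen remark that after the Lemma \ref{technical_lemma} bound the $(n-1)$-fold integral is literally a partition function at the shifted scale $\al-\tfrac\beta4$.
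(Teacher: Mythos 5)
Your proof follows exactly the route of the paper: isolate $\lam_1$, apply Lemma~\ref{technical_lemma} to each cross term $|\lam_1-\lam_j|^\beta$ so that the $(n-1)$-particle part of the integrand recombines into $Z_{n-1,\al-\beta/4,\beta}$, and finish with a one-dimensional Gaussian tail bound. Your explicit remark that bounding the $\lam_1$-exponent $-\bigl(\tfrac{\al}{2}-\tfrac{(n-1)\beta}{8}\bigr)\lam_1^2$ by $-\tfrac{\al}{4}\lam_1^2$ requires $(n-1)\beta\le 2\al$ --- automatic in the regime $2\al=n\beta$ --- makes explicit a hypothesis the paper uses tacitly; aside from this and the inessential numerical prefactor, the two arguments coincide.
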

   \begin{proof} Let $\al,\beta,t>0$. 
   	One has  \begin{align*}
   	P_{n,\al,\beta}\left( \left|\lam_1 \right|\geq t\right) &=	\frac{1}{Z_{n,\al,\beta}}\int_{\left| z_1\right| \geq t }\prod_{j=2}^{n} \left|z_1-z_j \right|^\beta e^{ -\frac{\al}{2}    \lam^2_1  }  \mathrm{d}\lam_1 \times  \\ & \times \int_{\R^{n-1}}\left| \Delta_{n-1}(\lam_2,\ldots,\lam_n)\right| ^\beta  \mathrm{e}^{-\frac{\al}{2} \sum_{i=1}^n  \lam^2_i  } \mathrm{d}\lam_2 \ldots \mathrm{d}\lam_n .
   	\end{align*}
   	Using the bound of Lemma \ref{technical_lemma}, we get \begin{align*}
\prod_{j=2}^{n} \left|z_1-z_j \right|^\beta \mathrm{e}^{ -\frac{\al}{2}    \lam^2_1  } \left| \Delta_{n-1}(\lam_2,\ldots,\lam_n)\right| ^\beta  \mathrm{e}^{-\frac{\al}{2} \sum_{i=1}^n  \lam^2_i  }  \leq &\\ 2^{n\beta}\exp\left( -\ff{2}\left( \al-\frac{\beta}{4} \right) \sum_{i=2}^{n}\lam^2_i -\frac{\al}{4}\lam^2_1\right) .
   	\end{align*}
   	 Assuming that $\ds{\al-\frac{\beta}{4}>0}$, it follows that $$P_{n,\al,\beta}\left( \left|\lam_1 \right|\geq t\right) \leq 2^{n\beta} \frac{Z_{n-1,\al-\frac{\beta}{4},\beta}}{Z_{n,\al,\beta}}\int_{\left| u\right|\geq t}\mathrm{e}^{-\frac{\al u^2}{4}}\mathrm{d}u. $$
   	 By a change of variable,
   	 $$P_{n,\al,\beta}\left( \left|\lam_1 \right|\geq t\right) \leq \frac{2^{n\beta+1}}{\al} \frac{Z_{n-1,\al-\frac{\beta}{4},\beta}}{Z_{n,\al,\beta}}\int_{\left| u\right|\geq \sqrt{\frac{\al}{2}}t}\mathrm{e}^{-\frac{u^2}{2}}\mathrm{s}u. $$
   	The classic Gaussian bound $\ds{\int_y^\infty \exp\left( {-\frac{u^2}{2}}\right) \mathrm{d}u\leq \ff{y}\exp\left( {-\frac{y^2}{2}}\right) }$ yields the result.
   \end{proof}
\subsection{Proof of the Main Result}
Our first result is the following:
\begin{lem}
	Let $\ds{\al=\frac{n\beta}{2}}$. Assume $\ds{\ff{n}\ll \beta\ll 1}$ and $\ds{\limsup_{n\infty}\frac{\log(n)}{n\beta}<+\infty }$, then $$ \lim_{M\to +\infty}\limsup_{n\infty} \ff{n\beta}\log P_{n,\al,\beta}(\lam_{\max} > M)=-\infty. $$
	And, $$\qquad \lim_{M\to +\infty}\limsup_{n\infty} \ff{n\beta}\log P_{n,\al,\beta}(\lam_{\min} < -M)=-\infty. $$
\end{lem}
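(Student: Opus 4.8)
The plan is to reduce the statement to a union bound over the $n$ particles using exchangeability, and then invoke the tail bound of Lemma \ref{tailbound_largest} together with the ratio asymptotics of Lemma \ref{partition_limit2}. First I would write, for $M>0$,
\[
P_{n,\al,\beta}(\lam_{\max}>M)\leq P_{n,\al,\beta}\Big(\exists i\leq n:\ \lam_i>M\Big)\leq n\,P_{n,\al,\beta}\big(\lam_1>M\big)\leq n\,P_{n,\al,\beta}\big(|\lam_1|\geq M\big),
\]
where the factor $n$ comes from the union bound and the exchangeability of $(\lam_1,\ldots,\lam_n)$. The same bound with $-M$ in place of $M$ handles $\lam_{\min}<-M$ by symmetry of the law under $\lam\mapsto-\lam$, so both statements follow from the single estimate on $P_{n,\al,\beta}(|\lam_1|\geq M)$.

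Next I would plug in Lemma \ref{tailbound_largest} with $t=M$ and $\al=\tfrac{n\beta}{2}$ (noting $\al-\tfrac{\beta}{4}>0$ holds for $n\geq 1$, so the hypothesis of that lemma is met), obtaining
\[
P_{n,\al,\beta}(\lam_{\max}>M)\ \leq\ n\cdot\frac{2^{\,n\beta+3/2}}{\al^{3/2}M}\,\frac{Z_{n-1,\al-\beta/4,\beta}}{Z_{n,\al,\beta}}\,\exp\!\Big(-\frac{\al}{4}M^2\Big).
\]
By Lemma \ref{partition_limit2}, under the assumption $\tfrac{1}{n}\ll\beta\ll\tfrac{1}{\log n}$ the ratio $Z_{n-1,\al-\beta/4,\beta}/Z_{n,\al,\beta}$ equals $\tfrac{1}{2\pi}\exp(\tfrac14+\tfrac58 n\beta)(1+o(1))$. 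Taking $\tfrac{1}{n\beta}\log$ of the right-hand side, the term $\log(2^{n\beta})=n\beta\log 2$ contributes $\log 2$, the partition ratio contributes $\tfrac58$, the Gaussian factor contributes $-\tfrac{M^2}{8}$ (since $\tfrac{\al}{4}M^2=\tfrac{n\beta}{8}M^2$), and the remaining prefactors $n/(\al^{3/2}M)$ are polynomial in $n$ and $\beta$, hence contribute $\tfrac{1}{n\beta}\log(\text{poly})$, which tends to $0$ precisely because $\limsup_n \tfrac{\log n}{n\beta}<+\infty$ (this is where the lower bound on $\beta$, really the hypothesis $\limsup \tfrac{\log n}{n\beta}<\infty$, is used). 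Therefore
\[
\limsup_{n\infty}\ff{n\beta}\log P_{n,\al,\beta}(\lam_{\max}>M)\ \leq\ \log 2+\frac58-\frac{M^2}{8},
\]
and letting $M\to+\infty$ gives $-\infty$, as desired; the $\lam_{\min}$ statement is identical.

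The main obstacle, though a mild one, is bookkeeping the lower-order factors: one must check that $\tfrac{1}{n\beta}\log\!\big(n\,\al^{-3/2}M^{-1}\big)\to 0$, which requires $\log n=O(n\beta)$ — exactly the hypothesis $\limsup_n \tfrac{\log n}{n\beta}<+\infty$ — and that the $o(1)$ from Lemma \ref{partition_limit2} is harmless after taking logarithms and dividing by $n\beta$. One should also be slightly careful that Lemma \ref{partition_limit2} is stated under $\beta\ll\tfrac{1}{\log n}$, which is consistent with the hypothesis $\beta\ll 1$ here only if one additionally knows $\beta\ll\tfrac1{\log n}$; in fact the present lemma is invoked later only in the regime of Theorem \ref{main_result} where $\beta\ll\tfrac1{\log n}$, so this is not a real gap, but if one wants the statement exactly as written for all $\beta\ll1$ one would instead bound the partition ratio crudely (e.g.\ $\tfrac{1}{n\beta}\log\tfrac{Z_{n-1,\al-\beta/4,\beta}}{Z_{n,\al,\beta}}$ bounded above by a constant via Selberg's formula and Gamma asymptotics) — this is the only point where care is needed.
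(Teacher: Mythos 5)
Your proof follows the paper's argument essentially verbatim: union bound plus exchangeability reduces to $nP_{n,\al,\beta}(\lam_1>M)$, then Lemma \ref{tailbound_largest} and Lemma \ref{partition_limit2} give an explicit bound whose $\tfrac{1}{n\beta}\log$ stays bounded in $n$ and has a $-\tfrac{M^2}{8}$ term that drives it to $-\infty$, and $\lam_{\min}$ is handled by the $\lam\mapsto-\lam$ symmetry. One small correction: under the hypothesis $\limsup_n \tfrac{\log n}{n\beta}=c<\infty$, the term $\tfrac{1}{n\beta}\log\big(n/(\al^{3/2}M)\big)$ does not tend to $0$ in general — its limsup is bounded above by $c$, not by $0$ (the paper keeps this $c$ explicitly in the final bound). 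This does not affect the conclusion, since $c$ is a constant independent of $M$ and the $-M^2/8$ term still sends everything to $-\infty$, but as written your intermediate display drops the constant $c$. Your observation about the hypothesis mismatch with Lemma \ref{partition_limit2} ($\beta\ll 1/\log n$ versus $\beta\ll 1$) is a genuine point of care that the paper glosses over; in practice the lemma is only used in the regime of Theorem \ref{main_result}, so it is harmless, exactly as you note.
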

\begin{rmk}
	The first statement is used to prove exponential tightness as previously mentioned, while the second statement will help for showing a).
\end{rmk}
\begin{proof}
	We begin with the first statement. Let $M>0$. By Lemma \ref{tailbound_largest}, \begin{align*} 
	P_{n,\al,\beta}(\lam_{\max} >M) &=P_{n,\al,\beta}(\exists i\leq n,\lam_i >M)
	\\&\leq \sum_{i=1}^{n} P_{n,\al,\beta}(\lam_i >M) \quad \text{by union bound}
	\\&= nP_{n,\al,\beta}(\lam_1 >M) \quad \text{by exchangeability}
	\\&\leq n\frac{2^{n\beta+\frac{3}{2}}}{\al^{\frac{3}{2}} M} \frac{Z_{n-1,\al-\frac{\beta}{4},\beta}}{Z_{n,\al,\beta}}\exp\left(-\frac{\al }{4}M^2 \right) .
	\end{align*}
	Since we already proved that $\ds{\frac{Z_{n-1,\al-\frac{\beta}{4},\beta}}{Z_{n,\al,\beta}}\sim \ff{2\pi}\exp\left( \ff{4} +\frac{5}{8}n\beta \right) }$ in Lemma \ref{partition_limit2}, the claim follows as soon as $\ds{\limsup_{n\infty}\frac{\log(n)}{n\beta}<+\infty }$. Indeed, if there exists $c<+\infty$ such that $\ds{\limsup_{n\infty}\frac{\log(n)}{n\beta}\leq c}$, then $\ds{\limsup_{n\infty}\ff{n\beta}\log	P_{n,\al,\beta}(\lam_{\max} >M)\leq c+\log(2)+\frac{5}{8}-\frac{M^2}{4}}$.

 One deduces the second statement follows from the first one by noticing that the random vectors $\ds{\left(\lam_1,\ldots,\lam_n \right) }$ and $\ds{\left( -\lam_1,\ldots,-\lam_n \right)}$ have same law.
\end{proof}

\begin{rmk}
The first condition on $\beta$ simply rephrases the regime we focus on, that is $\beta\ll 1$ and $n\beta\gg 1$. The second hypothesis means that either $\ds{0<\limsup_{n\infty}\frac{\log(n)}{n\beta}<+\infty }$ or $\ds{\limsup_{n\infty}\frac{\log(n)}{n\beta}=0 }$. In the sequel, we focus on the latter. It encodes the fact that $\beta$ has slower decay rate than $\ds{\ff{n}}$ by a logarithm factor, that is $\ds{\frac{\log(n)}{n}\ll \beta}$.
\end{rmk}
   We show the first part (a), that is:
   \begin{lem} 	Let $\ds{\al=\frac{n\beta}{2}}$. Assume $\ds{\frac{\log(n)}{n}\ll \beta \ll \ff{\log(n)}}$. For any $x>2$, $$\limsup_{n\infty} \ff{n\beta}\log P_{n,\al,\beta}\left( \lam_{\max} \geq x\right) \leq -J(x).$$
   \end{lem}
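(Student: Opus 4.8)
The plan is to use the key formula together with the large deviations upper bound for the empirical measure (Theorem \ref{ldp_esd}) and the partition-function asymptotics of Lemma \ref{partition_limit1}. Starting from the key formula with $A_n=[x,+\infty)$ and $A_i=\R$ for $i<n$, and using exchangeability to write $P_{n,\al,\beta}(\lam_{\max}\geq x)\leq n\, P_{n,\al,\beta}(\lam_n\geq x)$, one obtains
\[
P_{n,\al,\beta}(\lam_{\max}\geq x)\leq n\,\frac{Z_{n-1,\al,\beta}}{Z_{n,\al,\beta}}\;\E_{P_{n-1,\al,\beta}}\!\left[\int_x^{+\infty}\me^{(n-1)\beta\,\phi_n(\lam_n,L_{n-1})}\,\ud\lam_n\right].
\]
By Lemma \ref{partition_limit1} the ratio $Z_{n-1,\al,\beta}/Z_{n,\al,\beta}$ is $\frac{1}{2\pi}\me^{n\beta/2}(1+o(1))$, which on the scale $\frac{1}{n\beta}\log(\cdot)$ contributes exactly $\frac12$; this is where the constant $-\frac12$ in $J$ comes from. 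The factor $n$ contributes $\frac{1}{n\beta}\log n\to 0$ by the hypothesis $\frac{\log n}{n}\ll\beta$, which is precisely why that lower bound on $\beta$ is needed here.

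The core of the argument is to bound $\frac{1}{n\beta}\log$ of the expectation. First I would truncate: on the event $\lam_{\max}>M$ the earlier lemma already gives an arbitrarily negative contribution (uniformly, once $M$ is large), so it suffices to handle $\lam_n\in[x,M]$. On that compact range, $\phi_n\leq\phi$ and $\phi_n\to\phi$ uniformly in $(\lam_n,\mu)$, so $\me^{(n-1)\beta\phi_n(\lam_n,L_{n-1})}\leq \me^{(n-1)\beta(\phi(\lam_n,L_{n-1})+\eps_n)}$ with $\eps_n\to0$. The map $(\lam,\mu)\mapsto\phi(\lam,\mu)$ is upper semicontinuous in $\mu$ (it is bounded above and the log is u.s.c.), and bounded above by $\sup_{\lam\geq x}\phi(\lam,\sigma)=\phi(x,\sigma)$ only when $\mu=\sigma$; for the general upper bound I would partition $M_1(\R)$ (or rather a relevant compact set of measures, using exponential tightness of $L_{n-1}$ at speed $n^2\beta\gg n\beta$) into small weak neighborhoods, apply Theorem \ref{ldp_esd} to each via a Laplace-type / Varadhan argument, and use that $\sup_{\lam\in[x,M]}\phi(\lam,\mu)$ is close to $\sup_{\lam\in[x,M]}\phi(\lam,\sigma)\leq\phi(x,\sigma)$ whenever $I(\mu)$ is small (while if $I(\mu)$ is not small, the speed mismatch $n^2\beta$ vs.\ $n\beta$ makes that neighborhood's contribution $-\infty$). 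Combining, $\limsup\frac{1}{n\beta}\log\E[\cdots]\leq\phi(x,\sigma)$, and adding the $\frac12$ from the partition ratio gives $\limsup\frac{1}{n\beta}\log P_{n,\al,\beta}(\lam_{\max}\geq x)\leq\phi(x,\sigma)+\frac12=-J(x)$.

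The main obstacle is the interchange of the two large-deviations scales: one must exploit that $L_{n-1}$ concentrates at $\sigma$ at the \emph{faster} speed $n^2\beta$, so that only measures within $o(1)$ (in the weak sense) of $\sigma$ matter at speed $n\beta$, and then show $\sup_{\lam\in[x,M]}\phi(\lam,\mu)\to\phi(x,\sigma)$ as $\mu\to\sigma$ weakly — which requires care because $\phi(\cdot,\mu)$ involves $\int\log|\lam-y|\,\ud\mu(y)$, continuous in $\mu$ for $\lam\geq x>2$ bounded away from the support of nearby $\mu$'s, but one should still confirm no mass of $L_{n-1}$ escapes near $\lam_n$. A clean way is to first restrict to $\lam_n\geq x$ and note $y\mapsto\log|\lam_n-y|$ is bounded and continuous on a fixed large compact containing the effective support, making $\mu\mapsto\phi(\lam_n,\mu)$ genuinely continuous there; the tail of $L_{n-1}$ outside that compact is controlled by exponential tightness at the faster speed. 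The rest — the truncation at $M$, the uniform $\phi_n\to\phi$, and the Laplace principle for finitely many pieces — is routine given the results already in the excerpt.
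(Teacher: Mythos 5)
Your proposal is correct and follows essentially the same route as the paper: key formula plus exchangeability and union bound, the partition-ratio asymptotics of Lemma \ref{partition_limit1} contributing the constant $\tfrac12$, the $\log n/(n\beta)\to 0$ use of the lower bound on $\beta$, truncation of $\lam_n$ to $[x,M]$ via the tail lemma, the LDP for $L_{n-1}$ at the faster speed $n^2\beta$ to eliminate measures away from $\sigma$, and upper semicontinuity of $\phi$ (via truncating the logarithm) to pass to $\phi(x,\sigma)$ in the limit $\delta\to 0$.

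Two small points where the paper is organized more simply than what you sketch. First, rather than a general covering of $M_1(\R)$ by small weak neighborhoods combined with a Varadhan-type Laplace argument, the paper makes a single dichotomy on the event $\{L_{n-1}\in B_M(\delta)\}$ versus its complement: on $B_M(\delta)$ it bounds the integrand by $\sup_{z\in[x,M],\,\mu\in B_M(\delta)}\phi_n(z,\mu)$, and on the complement it bounds the integrand crudely by $\phi_M=\log(2M)$ and lets the $n^2\beta$-speed LDP crush the remaining probability; the Laplace principle then handles the two-term sum. Second, the paper confines \emph{all} particles to $[-M,M]$ (also truncating $\lam_{\min}$ at $-M$, using the symmetry-in-law $\lam\mapsto-\lam$), so that $L_{n-1}$ is supported on a fixed compact, which is exactly what makes the u.s.c. argument and the uniform bound $\phi\le\phi_M$ clean. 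Your suggestion to control the tail of $L_{n-1}$ by ``exponential tightness at speed $n^2\beta$'' is slightly imprecise: the event that $L_{n-1}$ charges $[-M,M]^c$ is $\{\exists i:|\lam_i|>M\}$, and this is controlled by the tail bound of Lemma \ref{tailbound_largest} at speed $n\beta$, not $n^2\beta$; the $n^2\beta$ exponential tightness of the empirical measure in the weak topology only gives concentration on a Prokhorov-tight family, not on measures with a fixed compact support. This does not break your argument — decay at speed $n\beta$ with an $M$-dependent rate $\sim M^2/4$ is exactly what is needed and what the paper uses — but the speed attribution should be corrected.
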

   \begin{proof}
   First, note that for $M>x>2$, $$P_{n,\al,\beta}\left( \lam_{\max} \geq x\right) \leq P_{n,\al,\beta}\left( \lam_{\max} \in [x,M]\right) +P_{n,\al,\beta}\left( \lam_{\max} > M\right) . $$
   The second term $\ds{P_{n,\al,\beta}\left( \lam_{\max} > M\right)}$ is negligible as $M\to +\infty$ so after an use of the Laplace principle, we only need to control the quantity $\ds{P_{n,\al,\beta}\left( \lam_{\max} \in [x,M]\right)}$. Likewise, 
   \begin{align*}
   P_{n,\al,\beta}(\lam_{\max} \in [x,M]) &\leq P_{n,\al,\beta}(\lam_{\max} \in [x,M], \lam_{\min} <-M) \\&\qquad + P_{n,\al,\beta}(\lam_{\max} \in [x,M], \lam_{\min} \geq -M)
   \\&\leq P_{n,\al,\beta}(\lam_{\min} <-M)+P_{n,\al,\beta}(\lam_{\max} \in [x,M], \lam_{\min} \geq -M)
   \\&= P_{n,\al,\beta}\left( \lam_{\min} <-M\right)+P_{n,\al,\beta}\left( \exists i, \lam_i \geq x, \forall j, \lam_j \in [-M,M] \right) .
   \end{align*} 
  For the same reasons, we can neglect the first term while upper bounding. The second term is the core and is more handable than the former quantity $\ds{ P_{n,\al,\beta}\left( \lam_{\max} \geq x\right)}$. It can be retranscripted by union bound and exchangeability. It will be linked to $J(x)$.

  We endow the space $ M_1(\R)$ of probability measures on $\R$ with the $L^1$ Wasserstein distance $d$ which metrizes the weak convergence. We note $I_M = [-M,M]^{n-1}$, $\ds{\phi_M =  \log(2M) }$, $B(\delta) = \{\mu \in M_1(\R), d(\mu, \sigma)<\delta \}$ and $B_M(\delta) = \{\mu \in B(\delta), \supp(\mu) \subset [-M,M] \}$. Then for any $x\in [-M,M]$ and $\mu$ supported on $[-M,M]$, one has the upper bound $\ds{\phi(x,\mu)\leq \phi_M}$. 
  
  Let us note $$ \xi:=P_{n,\al,\beta}\left( \exists i, \lam_i \geq x, \forall j, \lam_j \in [-M,M] \right) .$$
  By union bound, exchangeability and \textit{key formula}, one has for $x<M$: \begin{align*}
  \xi&\leq  n\frac{Z_{n-1,\al,\beta}}{Z_{n,\al,\beta}}\int_{x}^M \mathrm{d}\lam_n \int_{I_M} \exp\left( {(n-1)\beta \phi_n (\lam_n,L_{n-1})}\right) \mathrm{d}{P}_{n-1,\al,\beta}(\lam) .\end{align*} Splitting accordingly to the event $\ds{\{ L_{n-1}\in B_M(\delta)\} }$,
   \begin{align*}
   \xi&\leq n\frac{Z_{n-1,\al,\beta}}{Z_{n,\al,\beta}}\int_{x}^M \mathrm{d}\lam_n \int_{I_M} \mathrm{e}^{(n-1)\beta \phi_n (\lam_n,L_{n-1})}  1_{\{L_{n-1}\in B_M(\delta) \}}\mathrm{d}{P}_{n-1,\al,\beta}(\lam)
   \\&\qquad  + n\frac{Z_{n-1,\al,\beta}}{Z_{n,\al,\beta}}\int_{x}^M \mathrm{d}\lam_n \int_{I_M} \mathrm{e}^{(n-1)\beta \phi_n (\lam_n,L_{n-1})}1_{\{L_{n-1}\notin B_M(\delta) \}}\mathrm{d}{P}_{n-1,\al,\beta}(\lam) .
   \end{align*}
   Thus,
   \begin{align*}
   \xi&\leq n\frac{Z_{n-1,\al,\beta}}{Z_{n,\al,\beta}}  \int_{x}^{M}\exp\left( {(n-1)
   	\sup_{\mu \in B_M(\delta)} \phi_n(z,\mu)  }\right) dz + 
   \\&\qquad \qquad \qquad +
   n\frac{Z_{n-1,\al,\beta}}{Z_{n,\al,\beta}} (M-x)\exp\left( {(n-1)\beta\phi_M}\right)  {P}_{n-1,\al,\beta}\left( L_{n-1} \notin B\left( \delta\right) \right)   
   \\ &\leq n\frac{Z_{n-1,\al,\beta}}{Z_{n,\al,\beta}}(M-x)  \exp\left( {(n-1)
   	\sup_{\mu \in B_M(\delta),z\in [x,M]} \phi_n(z,\mu)  } \right) + 
   \\&\qquad \qquad  +n\frac{Z_{n-1,\al,\beta}}{Z_{n,\al,\beta}}(M-x) \exp\left( {(n-1)\beta\phi_M} \right) {P}_{n-1,\al,\beta}\left( L_{n-1} \notin B\left( \delta\right) \right) .    
   \end{align*}
    Hence, for $x<M$ and $\ds{n\beta \ll \log(n)}$, the quantity $\ds{\Lambda := \limsup_{n\infty}\ff{n \beta}\log P_{n,\beta}(\lam_{\max} \in [x,M]) }$ is bounded by:
   \begin{align*}
   \limsup_{n\infty} \ff{n\beta} \log \left(  n\frac{Z_{n-1,\al,\beta}}{Z_{n,\al,\beta}}(M-x)\left(  \mathrm{e}^{(n-1)
   	\sup \phi_n(z,\mu)  } 
   + \mathrm{e}^{(n-1)\beta\phi_M}
    {P}_{n-1,\al,\beta}\left( L_{n-1} \notin B\left( \delta\right) \right)    \right) \right) ,
   \end{align*}
   where the supremum is taken on $\ds{\{\mu \in B_M(\delta),z\in [x,M]\} }$.
   
   Recall that $\ds{ n\beta \gg \log(n)}$ and $\ds{\phi_M:= \log(2M)}$. A double application of Laplace lemma and Lemma \ref{partition_limit1} yields:
   \begin{align*}
\Lambda&\leq 
  \frac{1}{2}+ \limsup_{n\infty}\ff{n\beta}\log \big( \mathrm{e}^{(n-1)\beta\sup_{z,\mu} \phi_n(z,\mu)}\big)  \\&\qquad+ \limsup_{n\infty}\ff{n\beta}\log \big(\mathrm{e}^{(n-1)\beta\phi_M} {P}_{n-1,\al,\beta}\left( L_{n-1} \notin B\left( \delta\right) \right)\big)
   \\&= \frac{1}{2}+\limsup_{n\infty}\ff{n\beta}\log \left(  {P}_{n-1,\al,\beta}\left( L_{n-1} \notin B\left( \delta\right) \right)\right) +\limsup_{n\infty} \sup_{z\in [x,M], \mu \in B_M(\delta)} \phi_n(z,\mu) 
   \\&= \frac{1}{2}+\limsup_{n\infty}\ff{n\beta}\log \left(  {P}_{n-1,\al,\beta}\left( L_{n-1} \notin B\left( \delta\right) \right)\right) + \sup_{z\in [x,M], \mu \in B_M(\delta)} \phi(z,\mu) .
   \end{align*}
   Now, roughly speaking, we show that $\ds{ \mathrm{e}^{(n-1)\beta\phi_M}{P}_{n-1,\al,\beta}\left( L_{n-1} \notin B\left( \delta\right) \right) }$ is approximately $\ds{ \exp\left( -n^2\beta+n\beta\right) }$ when $n$ is large, so that the we can neglect the corresponding term while upper bounding.

   From Theorem \ref{ldp_esd}, we know that the empirical measure $L_{n-1}$ satisfies a large deviations principle in $M_1(\R)$ with speed $n^2\beta$ and rate function $I$, valid for any $\ds{\al \sim \frac{n\beta}{2}\gg 1}$. 
   
   The set $B(\delta)$ is an open ball in $M_1(\R)$ for the distance $d$, and $I$ achieves its unique minimum value $0$ at the semicircle law $\sigma$. Therefore, $$\limsup_{n\infty} \ff{n^2\beta} \log\left(  P_{n-1,\al,\beta}\left( L_{n-1} \notin B(\delta)\right) \right)  \leq -\inf_{B(\delta)^c} I < 0. $$
   
Thus, the term $\ds{ \ff{n\beta} \log\left(  P_{n-1,\al,\beta}\left( L_{n-1} \notin B(\delta)\right) \right)}$ diverges to $-\infty$ as a factor $n$ is changed.
   
   It follows that we can neglect it for the upper bound: $$\Lambda \leq  \frac{1}{2}+\sup_{z\in [x,M], \mu \in B_M(\delta)} \phi(z,\mu).$$
   Since $\Lambda$ is independent of $\delta>0$, we only need to show that: $$\lim\limits_{\delta \to 0} \sup_{z\in [x,M],\mu \in B_M(\delta)} \phi(z,\mu)= \phi(x,\sigma).$$
   It is clear that $$ \liminf_{\delta \to 0} \sup_{z\in [x,M],\mu \in B_M(\delta)} \phi(z,\mu)\geq\sup_{z\in [x,M]} \phi(z,\sigma).$$
   Besides, one can write: $$\phi(z,\mu) = \inf_{\eta>0}  \int  \log\left( \left| z-y\right|\vee \eta\right)  \mathrm{d}\mu(y) -\frac{z^2}{4}=\inf_{\eta>0}\phi_{\eta}(z,\mu).$$ 
   The application $\phi_{\eta}$ is upper semi-continuous on $[-M,M]\times M_1([-M,M])$. Since an infimum of upper semicontinuous functions is also upper semicontinuous, we deduce that $(z,\mu)\mapsto\phi(z,\mu)$ is upper semicontinuous on $[-M,M]\times M_1([-M,M])$. Thus, the reverse inequality holds: $$\ds{\limsup_{\delta \to 0} \sup_{z\in [x,M],\mu \in B_M(\delta)} \phi(z,\mu)\leq\sup_{z\in [x,M]} \phi(z,\sigma)}.$$ 
   
   Now, using the inequality for $x<M$ valid for $M$ large enough: $$  \int  \log\left| x-y\right| \mathrm{d}\sigma(y) - \frac{x^2}{4}  \geq 
   \sup_{z\in [M,+\infty)} \left( \int  \log\left| z-y\right| d\sigma(y) - \frac{z^2}{4}   \right),$$ we get the equality: $$\sup_{z\in [x,M]} \phi(z,\sigma)=\sup_{z\in [x,+\infty(} \phi(z,\sigma)=\phi(x,\sigma).$$ Hence, $$\lim\limits_{\delta \to 0} \sup_{z\in [x,M],\mu \in B_M(\delta)} \phi(z,\mu)= \phi(x,\sigma).$$
      \end{proof}
      Let us show claim (b).
      \begin{lem}
Let $\ds{\al = \frac{n\beta}{2}}$ and assume $\ds{\frac{\log(n)}{n}\ll \beta\ll \ff{\log(n)}}$. For any $x>2$, $$\lim_{\delta\to 0} \liminf_{n\infty} \ff{n\beta}\log P_{n,\al,\beta}\big(\lam_{\max} \in (x-\delta,x+\delta)\big)\geq -J(x).$$
      \end{lem}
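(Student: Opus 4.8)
The strategy is the standard lower-bound counterpart to the previous lemma: restrict the event $\{\lam_{\max}\in(x-\delta,x+\delta)\}$ to a favourable sub-event on which the \emph{key formula} gives a clean lower bound, then invoke the large deviations principle of Theorem~\ref{ldp_esd} to control the empirical measure of the remaining particles. Concretely, for $\eps>0$ small and $M>x+\delta$, I would bound from below
\[
P_{n,\al,\beta}\big(\lam_{\max}\in(x-\delta,x+\delta)\big)\;\geq\; P_{n,\al,\beta}\big(\lam_1\in(x-\delta,x+\delta),\ \lam_j\in[-M,M]\ \forall j\geq 2,\ L_{n-1}\in B_M(\eps)\big),
\]
using exchangeability to single out $\lam_1$ (note we do \emph{not} divide by $n$ here — throwing away the other choices of which index is maximal only decreases the probability, which is fine for a lower bound, and the factor $n$ is subexponential in $n\beta$ anyway). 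Applying the key formula, this is
\[
\frac{Z_{n-1,\al,\beta}}{Z_{n,\al,\beta}}\int_{x-\delta}^{x+\delta}\exp\!\big((n-1)\beta\,\phi_n(\lam_n,L_{n-1})\big)\,\one_{\{L_{n-1}\in B_M(\eps)\}}\,\mathrm{d}\lam_n\,\mathrm{d}P_{n-1,\al,\beta}(\lam).
\]
On the event $L_{n-1}\in B_M(\eps)$ and for $z\in(x-\delta,x+\delta)$, one has $\phi_n(z,L_{n-1})\geq \inf_{z'\in(x-\delta,x+\delta),\,\mu\in B_M(\eps)}\phi_n(z',\mu)$, so the integral is at least $2\delta\exp\big((n-1)\beta\inf\phi_n\big)\,P_{n-1,\al,\beta}(L_{n-1}\in B_M(\eps))$.

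Taking $\ff{n\beta}\log$ and then $\liminf_{n\to\infty}$: the prefactor $Z_{n-1,\al,\beta}/Z_{n,\al,\beta}$ contributes $\tfrac12$ by Lemma~\ref{partition_limit1}; the $\log(2\delta)$ and the $\tfrac1n$ error between $\phi_n$ and $\phi$ vanish after division by $n\beta$; and the lower-bound half of Theorem~\ref{ldp_esd} combined with the fact that $B_M(\eps)$ contains $\sigma$ in its interior (for $M$ large, $\eps$ fixed) gives $\liminf \ff{n^2\beta}\log P_{n-1,\al,\beta}(L_{n-1}\in B_M(\eps))\geq -\inf_{B_M(\eps)} I$, hence $\ff{n\beta}\log P_{n-1,\al,\beta}(L_{n-1}\in B_M(\eps))\to 0$ since this quantity is, up to the extra factor $n$, bounded below by a constant. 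Here I must be slightly careful: the LDP in Theorem~\ref{ldp_esd} is for measures with the weak topology and $B_M(\eps)$ imposes a support constraint, so I would instead use the genuinely open ball $B(\eps)$ together with a separate, easy argument (e.g. another application of Lemma~\ref{tailbound_largest} via a union bound) showing that $P_{n-1,\al,\beta}(L_{n-1}\in B(\eps),\ \supp(L_{n-1})\not\subset[-M,M])$ is negligible at speed $n\beta$ for $M$ large; this is the same type of tail estimate already used repeatedly above. Collecting terms yields
\[
\lim_{\delta\to 0}\liminf_{n\to\infty}\ff{n\beta}\log P_{n,\al,\beta}\big(\lam_{\max}\in(x-\delta,x+\delta)\big)\;\geq\;\tfrac12+\lim_{\delta\to0}\inf_{z\in(x-\delta,x+\delta),\,\mu\in B(\eps)}\phi(z,\mu),
\]
and since $\eps>0$ is arbitrary, letting $\eps\to0$ as well.

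The last step is the continuity/semicontinuity analysis. I need $\lim_{\delta\to0}\lim_{\eps\to0}\inf_{z\in(x-\delta,x+\delta),\mu\in B(\eps)}\phi(z,\mu)=\phi(x,\sigma)$. Since $\phi(z,\mu)=\int\log|z-y|\,\mathrm{d}\mu(y)-z^2/4$ and the log-integral is lower semicontinuous in $(z,\mu)$ for the weak topology (it is a supremum of bounded continuous functions $\int\log(|z-y|\wedge k)\,\mathrm{d}\mu$... rather: $\int\log(|z-y|\vee\eta)$ is continuous and $\phi=\inf_\eta$ is only u.s.c., so for the \emph{lower} bound I use instead that $\mu\mapsto\int\log|z-y|\,\mathrm{d}\mu$ is l.s.c. as a supremum over $\eta\to0$ truncations from below when mass is controlled — cleanest is to note $\log|z-y|\geq\log(|z-y|\wedge 1)$ is bounded continuous, handle the near-diagonal singularity by uniform integrability which fails, so) I would argue directly: fix $z$ near $x$; since $x>2$, for $\delta$ small $z$ stays bounded away from $\supp(\sigma)=[-2,2]$, so $y\mapsto\log|z-y|$ is \emph{bounded and continuous} on a neighbourhood of $[-2,2]$, and for $\mu\in B(\eps)$ with $\eps$ small $\mu$ puts almost all mass near $[-2,2]$; a truncation argument then gives $\phi(z,\mu)\geq\phi(z,\sigma)-o_\eps(1)$ uniformly for $z\in(x-\delta,x+\delta)$. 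Finally $\inf_{z\in(x-\delta,x+\delta)}\phi(z,\sigma)\to\phi(x,\sigma)$ as $\delta\to0$ by continuity of $z\mapsto\phi(z,\sigma)$ on $(2,\infty)$, and $\tfrac12+\phi(x,\sigma)=-J(x)$ by the definition of $J$. \textbf{The main obstacle} is precisely this regularity bookkeeping around the logarithmic singularity together with the support-constraint mismatch in applying Theorem~\ref{ldp_esd}; everything else is a transcription of the corresponding argument in \cite{cupbook,aging}, with Lemmas~\ref{partition_limit1} and~\ref{tailbound_largest} supplying the high-temperature-specific inputs.
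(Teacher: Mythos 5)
There is a genuine gap at the very first step: the sub-event you choose is not actually contained in $\{\lam_{\max}\in(x-\delta,x+\delta)\}$, so the inequality you write as a lower bound is false. You take $M>x+\delta$ and allow $\lam_j\in[-M,M]$ for all $j\geq 2$; but then one of those $\lam_j$ can exceed $x+\delta$, in which case $\lam_{\max}>x+\delta$ and the event on the left fails. Conditioning $\lam_1$ to be near $x$ does not make $\lam_1$ the maximum. Adding the constraint $L_{n-1}\in B_M(\eps)$ does not repair this either, since a single outlier contributes only $O(M/n)$ to the Wasserstein distance and is therefore compatible with $B_M(\eps)$ once $n$ is large. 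The correct move — and the one the paper makes — is to confine the remaining $n-1$ particles to $(-M,r)$ for some fixed $r$ with $2<r<x-\delta$ (the paper takes $r<x-2\delta$); then the singled-out particle automatically \emph{is} $\lam_{\max}$ and the sub-event is genuinely contained in the target.

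Notice that this fix also dissolves what you flag as the "main obstacle." Once the other particles (and hence the support of $L_{n-1}$) are confined to $[-M,r]$ with $r<x-\delta$, the kernel $\log|z-y|$ is bounded and jointly continuous on $[x-\delta,x+\delta]\times[-M,r]$, so $\phi$ is simply continuous on $[x-\delta,x+\delta]\times M_1([-M,r])$ and $\inf\phi\to\phi(x,\sigma)$ follows with no truncation or semicontinuity gymnastics. The other pieces of your plan — dropping the factor $n$, extracting the ratio $Z_{n-1}/Z_n\to \tfrac{1}{2\pi}e^{n\beta/2}$ from Lemma~\ref{partition_limit1}, and handling the support constraint on $L_{n-1}$ — are sound and essentially match the paper's argument. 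The only other point of difference is that you propose invoking the LDP lower bound of Theorem~\ref{ldp_esd} (at speed $n^2\beta$) to show the empirical-measure factor is harmless at speed $n\beta$, whereas the paper shows more directly, via the tail bounds of Lemma~\ref{tailbound_largest} and the union bound, that $P_{n-1,\al,\beta}(L_{n-1}\in B_{r,M}(\delta))\to 1$; both routes work.
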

      \begin{proof}
    Let $x>2$. Let $\delta>0$ which will be arbitrary close to $0$, without loss of generality, we can assume that $2\delta <x-2$. Fix $r\in (2,x-2\delta)$. Then, with $I_r := (-M,r)^{n-1}$ and $B_{r,M}(\delta):=\{\mu \in B(\delta), \supp \mu \subset [-M,r] \}$, we have 
    \begin{align*}
    \Xi&:=P_{n,\al,\beta}\big(\lam_{\max} \in (x-\delta,x+\delta)\big)\\&\geq P_{n,\al,\beta}\big(\lam_{n} \in (x-\delta,x+\delta),\lam_i \in (-M,r), i =1,\ldots,n-1\big)
    \\ &= \frac{Z_{n-1,\al,\beta}}{Z_{n,\al,\beta}} \int_{x-\delta}^{x+\delta} \mathrm{d}\lam_n \int_{I_r} \mathrm{e}^{(n-1)\beta \phi_n(\lam_n, L_{n-1})}{P}_{n-1,\al,\beta}(\mathrm{d}\lam_1,\ldots,\mathrm{d}\lam_{n-1}) 
    \\ &\geq\frac{Z_{n-1,\al,\beta}}{Z_{n,\al,\beta}} \int_{x-\delta}^{x+\delta} \mathrm{d}\lam_n \int_{I_r} \mathrm{e}^{(n-1)\beta \phi_n(\lam_n, L_{n-1})}1_{\{L_{n-1}\in B_{r,M}(\delta)\} }{P}_{n-1,\al,\beta}(\mathrm{d}\lam) 
    \\&\geq 2\delta \frac{Z_{n-1,\al,\beta}}{Z_{n,\al,\beta}} \mathrm{e}^{  (n-1)\beta \inf_{z\in (x-\delta,x+\delta),\mu \in B_{r,M}(\delta)} \phi_n(z,\mu)}
    {P}_{n-1,\al,\beta}\big(L_{n-1}\in  B_{r,M}(\delta) \big).
    \end{align*}
    Let us show that $\ds{{P}_{n-1,\al,\beta}\big(L_{n-1}\in  B_{r,M}(\delta) \big)}$ converges to $1$ as $n\to +\infty$.
    
    First, one has the following facts: $$\exists i, \lam_i \notin (-M,r) \iff \max \lam_i \geq r \text{ or } \min \lam_i \leq -M$$
    $$\forall x>2, \qquad \limsup_{n\infty} \ff{n\beta}\log P_{n,\al,\beta}(\lam_{\max} \geq x)\leq -J(x) $$
    $$\lim_{M\to +\infty}\limsup_{n\infty} \ff{n\beta }\log P_{n,\al,\beta}(\lam_{\min} < -M)=-\infty. $$
    By union bound, the previous remarks, since $r>2$ and $M>0$ is chosen large enough,
    \begin{align*}
P_{n-1,\al,\beta}\left(L_{n-1}\notin B_{r,M}(\delta) \right) &\leq P_{n-1,\al,\beta} \left( \exists i \in \{1,\ldots,n-1\}, \lam_i \notin (-M,r)\right)
\\&\leq P_{n-1,\al,\beta} \left( \lam_{\max}\geq r \right) +P_{n-1,\al,\beta} \left( \lam_{\min}<-M\right)  \xrightarrow[n\infty]{}0.
    \end{align*}
    Hence, by Lemma \ref{partition_limit1},
    \begin{align*}
\lim\limits_{\delta\to 0 }\liminf_{n\infty} \ff{n\beta}\log P_{n,\al,\beta}\big(\lam_{\max} \in (x-\delta,x+\delta)\big)&\geq \frac{1}{2} +\lim\limits_{\delta \to 0} \inf_{z\in (x-\delta,x+\delta), \mu \in B_{r,M}(\delta)}\phi(z,\mu)
\\& = \frac{1}{2}+\phi(z,\sigma).
    \end{align*}
    The last equality comes from the continuity of $(z,\mu)\mapsto \phi(z,\mu)$ on $[x-\delta,x+\delta]\times M_1([-M,r])$.
\end{proof}
To conclude, we prove the assertion (c) thanks to Theorem \ref{ldp_esd}.
\begin{lem}
Let $\ds{\al = \frac{n\beta}{2}}$ and assume $\ds{\ff{n}\ll \beta\ll 1}$. For any $x<2$, $$\limsup_{n\infty} \ff{n\beta}\log P_{n,\al,\beta}(\lam_{\max} \leq x)=-\infty. $$ 
\end{lem}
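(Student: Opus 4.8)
The plan is to ride on the large deviations principle for the empirical measure, Theorem \ref{ldp_esd}, which holds at the strictly faster speed $n^2\beta$; this disparity of speeds is exactly what forces the $\ff{n\beta}$-rescaled log-probability to $-\infty$. The underlying observation is elementary: on the event $\{\lam_{\max}\leq x\}$ every particle lies in $(-\infty,x]$, so $L_n$ is supported in $(-\infty,x]$, whereas the limiting semicircle law $\sigma$ charges $(x,2]$ (since $x<2$). Thus $L_n$ is forced into a \emph{fixed} closed set avoiding $\sigma$, and the ESD large deviations principle penalizes this at speed $n^2\beta$.

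Concretely, I would set $F:=\{\mu\in M_1(\R):\mu((x,\infty))=0\}$. This set is closed in the weak topology: if $\mu_k\to\mu$ weakly with $\mu_k\in F$, then by the Portmanteau theorem applied to the open set $(x,\infty)$ one gets $\mu((x,\infty))\leq\liminf_k\mu_k((x,\infty))=0$. Moreover $\sigma\notin F$, because $\sigma((x,\infty))=\int_x^2\frac{\sqrt{4-y^2}}{2\pi}\,\mathrm{d}y>0$. Since $\{\lam_{\max}\leq x\}\subseteq\{L_n\in F\}$, it suffices to bound $P_{n,\al,\beta}(L_n\in F)$.

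Next I would invoke Theorem \ref{ldp_esd}, whose hypotheses are met here ($\al=\tfrac{n\beta}{2}\sim\tfrac{n\beta}{2}$, $\beta\ll1$, $n\beta\gg1$). The large deviations upper bound for the closed set $F$ gives
$$\limsup_{n\infty}\ff{n^2\beta}\log P_{n,\al,\beta}(L_n\in F)\leq -\inf_{\mu\in F}I(\mu)=:-c.$$
Here $c>0$: $I$ is a good rate function whose unique minimizer is $\sigma$, which lies outside the closed set $F$, so if $\inf_F I<1$ it is attained on the compact set $F\cap\{I\leq 1\}$ at some $\mu_0\in F$ with $I(\mu_0)=0$, forcing $\mu_0=\sigma\notin F$, a contradiction; hence $\inf_F I>0$. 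This positivity is the only point requiring genuine care.

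Finally, from $P_{n,\al,\beta}(\lam_{\max}\leq x)\leq P_{n,\al,\beta}(L_n\in F)$ we obtain
$$\ff{n\beta}\log P_{n,\al,\beta}(\lam_{\max}\leq x)\leq n\cdot\ff{n^2\beta}\log P_{n,\al,\beta}(L_n\in F),$$
and since the last factor is eventually bounded above by $-c/2$, the right-hand side tends to $-\infty$, which is exactly claim (c). Everything beyond the positivity of $c$ is routine bookkeeping.
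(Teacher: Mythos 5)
Your proof is correct and follows essentially the same route as the paper: both reduce $\{\lam_{\max}\leq x\}$ to membership of $L_n$ in a fixed closed set $F$ avoiding $\sigma$, invoke the ESD large deviations principle (Theorem~\ref{ldp_esd}) at the faster speed $n^2\beta$, and let the speed mismatch drive the $\ff{n\beta}$-scaled log-probability to $-\infty$. The only difference is cosmetic: you take $F=\{\mu\in M_1(\R):\mu((x,\infty))=0\}$ directly, checking closedness via Portmanteau, whereas the paper uses $F=\{\mu\in M_1(\R):\int f\,\mathrm{d}\mu=0\}$ for a bounded continuous $f$ vanishing on $(-\infty,x]$ with $\int f\,\mathrm{d}\sigma>0$; and you spell out the needed positivity of $\inf_F I$ via goodness of $I$ and uniqueness of its minimizer, a point the paper leaves implicit.
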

\begin{proof}
Let $x<2$. One can build a function $f\in \mathcal{C}_b(\R)$ such that for any $z\leq x$, $f(z)=0$ and $\int f\mathrm{d}\sigma >0$. Consider $\ds{F=\{\mu \in M_1(\R), \int f\mathrm{d}\mu=0\} }$. It is a closed set in $M_1(\R)$ with respect to the weak topology and clearly, $\sigma\notin F$ and $L_n \in F$. Thus, by Theorem \ref{ldp_esd},
\begin{align*}
\limsup_{n\infty}\ff{n^2\beta}\log P_{n,\al,\beta}\left(\lam_{\max} \leq x \right) &\leq \limsup_{n\infty}\ff{n^2\beta}\log P_{n,\al,\beta}\left(L_{n}\left( ]x,2]\right)=0  \right)
\\&\leq \limsup_{n\infty}\ff{n^2\beta}\log P_{n,\al,\beta}\left(L_n\in F \right)<-\inf_F I.
\end{align*} The result follows.
\end{proof}
		\bibliographystyle{plain}
		\bibliography{refldp}
		\Addresses
\end{document}